 \newtheorem{thm}{Theorem}[section]
 \newtheorem{cor}[thm]{Corollary}
 \newtheorem{lem}[thm]{Lemma}
 \newtheorem{prop}[thm]{Proposition}
 \theoremstyle{definition}
 \newtheorem{defn}[thm]{Definition}
 \theoremstyle{remark}
 \newtheorem{ex}[thm]{Example}
 \numberwithin{equation}{section}
\newcommand{\AC}{AC}
\newcommand{\BV}{{BV}}
\newcommand{\CTPP}{\hbox{$CT\kern-0.2ex{P}\kern-0.2ex{P}$}}
\newcommand{\Pol}{\mathcal{P}}
\newcommand{\mR}{\mathbb{R}}
\newcommand{\mC}{\mathbb{C}}
\newcommand{\mZ}{\mathbb{Z}}
\newcommand{\abs}[1]{\left\lvert#1\right\rvert}
\newcommand{\vecx}{{\boldsymbol{x}}}
\newcommand{\vecy}{{\boldsymbol{y}}}
\newcommand{\vecz}{{\boldsymbol{z}}}
\newcommand{\norm}[1]{\left\lVert#1\right\rVert}
\newcommand{\normbv}[1]{\left\lVert#1\right\rVert_{\BV(\sigma)}}
\newcommand\st{\thinspace : \thinspace}
\def\ls[#1,#2]{\overline{\vphantom{\vbox to 1.2 ex{}} #1\, #2}}
\def\lso[#1,#2]{\overline{\vphantom{\vbox to 1.2 ex{}} #1\, #2}^\circ}
\def\sparen(#1){\Bigl ( #1 \Bigr )}
\def\ssparen(#1){ (#1) }
\newcommand\plist[1]{\bigl[ #1 \bigr]}
\newcommand{\interior}[1]{\mathop{\mathrm{int}}(#1)}
\DeclareMathOperator{\var}{var}
\DeclareMathOperator{\cvar}{\rm cvar}
\DeclareMathOperator{\vf}{vf}
\DeclareMathOperator{\supp}{supp}
\newcommand{\journalname}[1]{\textrm{#1}}
\begin{document}

\title{Isomorphisms of $AC(\sigma)$ spaces for countable sets}
\author{Ian Doust and Shaymaa Al-shakarchi}

\address{School of Mathematics and Statistics,
University of New South Wales,
UNSW Sydney 2052 Australia}%
\email{i.doust@unsw.edu.au}

\date{28 November 2013}
\subjclass{Primary: 46J10; Secondary: 46J35,47B40,26B30}
\keywords{$AC(\sigma)$ spaces, functions of bounded variation, compact operators}

\begin{abstract}
It is known that the classical Banach--Stone theorem does not extend to the class of $AC(\sigma)$ spaces of absolutely continuous functions defined on compact subsets of the complex plane. On the other hand, if $\sigma$ is restricted to the set of compact polygons, then all the corresponding $AC(\sigma)$ spaces are isomorphic (as algebras). In this paper we examine the case where $\sigma$ is the spectrum of a compact operator, and show that in this case one can obtain an infinite family of homeomorphic sets for which the corresponding function spaces are not isomorphic. 
\end{abstract}

\maketitle

\section{Introduction}

Well-bounded operators are one generalization of self-adjoint operators to the Banach space setting.  A bounded linear operator $T$ on a Banach space is said to be well-bounded if there is a compact interval $[a,b] \subseteq \mR$ such that $T$ admits a bounded $AC[a,b]$ functional calculus. At least on reflexive spaces such operators possess a type of spectral decomposition theory similar to that for self-adjoint operators, but one which allows conditionally rather than unconditionally convergent spectral expansions.

Even on a general Banach space, every compact well-bounded operator admits a diagonal representation of the form
  \begin{equation}\label{diag-form}
     T = \sum_{j=1}^\infty \lambda_j P_j
  \end{equation}
where $\{\lambda_j\}$ is the set of nonzero eigenvalues of $T$ and $\{P_j\}$ is the corresponding set of Riesz projections onto the eigenspaces. Conversely, under suitable conditions on $\{\lambda_j\}$ and $\{P_j\}$, any operator formed in this way is compact and well-bounded \cite{CD}. For example, if $\{Q_j\}_{j=0}^\infty$ are the projections associated with a Schauder decomposition of the space $X$ and $P_j = Q_j - Q_{j-1}$, ($j = 1,2,\dots)$ then $\sum_{j=1}^\infty \lambda_j P_j$ is compact and well-bounded for any decreasing sequence $\{\lambda_j\}$ of positive reals converging to zero. 

An obstruction to extending this theory to provide an analogue of normal operators was the lack of a good replacement for the function algebra $AC[a,b]$. Ideally the functional calculus for an operator $T$ should only depend on the values of the function on $\sigma(T)$ and so one would like a suitable algebra $AC(\sigma)$ where $\sigma$ is any nonempty compact subset of $\mC$. Over the years a number of papers addressed this problem (see for example, \cite{R2,BG1,BG2}) without providing a fully satisfactory theory.

For use in spectral theory, any proposed definition of a Banach algebra $AC(\sigma)$ of `absolutely continuous functions' on a compact set $\sigma \subseteq \mC$ should have the properties that:
\begin{enumerate}
 \item it should agree with the usual definition if $\sigma$ is an interval in $\mR$;
 \item $\AC(\sigma)$ should contain all sufficiently well-behaved functions;
 \item if $\alpha,\beta \in \mC$ with $\alpha \ne 0$, then the space $\AC(\alpha \sigma + \beta)$ should be isometrically isomorphic to $\AC(\sigma)$.
\end{enumerate}
The final condition is capturing the fact that the spectral decomposition and functional calculus for $\alpha T + \beta$ should essentially match that of $T$. 

Since none of the existing concepts of absolute continuity for functions defined on subsets of the plane satisfied these conditions, a new definition (which does satisfy properties 1, 2 and 3) was introduced in \cite{AD1} and these are the spaces $AC(\sigma)$ which we consider in this paper. We briefly outline the appropriate definitions in Section~\ref{prelims}.

An $AC(\sigma)$ operator is defined to be one which admits a bounded $AC(\sigma)$ functional calculus. This class includes not only all normal Hilbert space operators, but also the classes of scalar-type spectral operators, well-bounded operators and trigonometrically well-bounded operators acting on any Banach space \cite{AD1.5}. Compact $AC(\sigma)$ operators admit a spectral decomposition as a possibly conditionally convergent sum of the form (\ref{diag-form}) (see \cite{AD2}). Conversely, Theorem~5.1 of \cite{AD2} shows how to construct large families of compact $AC(\sigma)$ operators on a Banach space $X$ from conditional decompositions of $X$. In particular, $\alpha T$ is a compact $AC(\alpha \sigma(T))$ operator whenever $\alpha \in \mC$ and $T \in B(X)$ is compact and well-bounded.

For a normal operator $T$ on a Hilbert space, the $C^*$-algebra $C^*(T)$ generated by $T$ is isometrically isomorphic to $C(\sigma(T))$. The Banach--Stone and Gelfand--Kolmogorov theorems say that if $\Omega_1$ and $\Omega_2$ are compact Hausdorff spaces then the following conditions are equivalent.
\begin{enumerate}
 \item $\Omega_1$ and $\Omega_2$ are homeomorphic,
 \item $C(\Omega_1)$ and $C(\Omega_2)$ are linearly isometric as Banach spaces,
 \item $C(\Omega_1)$ and $C(\Omega_2)$ are isomorphic as algebras (or as $C^*$-algebras).
\end{enumerate}
This greatly limits the structure of the algebras $C^*(T)$ for compact normal operators.

The link between the Banach algebra
  \[ \mathcal{B}_T = \mathrm{cl}\{f(T) \st f \in AC(\sigma)\} \]
generated by an $AC(\sigma)$ operator and the function algebra $AC(\sigma)$ is less direct (even in the case that $\sigma = \sigma(T)$). Nonetheless, it is natural to ask about the extent to which a Banach--Stone type theorem might apply in this setting. In particular, we are predominently interested in determining conditions under which two such spaces $AC(\sigma_1)$ and $AC(\sigma_2)$ are isomorphic as Banach algebras.  

To be definite, if $\mathcal{A}$ and $\mathcal{B}$ are algebras, we shall say that $\mathcal{A}$ is isomorphic as an algebra to $\mathcal{B}$ if there exists an algebra isomorphism (that is, a linear and multiplicative bijection) $\Phi: \mathcal{A} \to \mathcal{B}$. If $\mathcal{A}$ and $\mathcal{B}$ are Banach algebras then we shall say that they are isomorphic as Banach algebras, and write $\mathcal{A} \simeq \mathcal{B}$, if there is an algebra isomorphism $\Phi: \mathcal{A} \to \mathcal{B}$ such that $\Phi$ and $\Phi^{-1}$ are continuous. Since this will generally be the context in this paper, unless otherwise specified, the term isomorphic should be taken mean isomorphic as Banach algebras.

In \cite{DL2} it was shown that if $AC(\sigma_1)$ and $AC(\sigma_2)$ are isomorphic as algebras, then any algebra isomorphism is necessarily bicontinuous and hence the spaces are in fact isomorphic as Banach algebras. However, unlike the case for the $C(\Omega)$ spaces, the algebra isomorphisms need not be isometric. 

One could also consider the question as to when $AC(\sigma_1)$ and $AC(\sigma_2)$ are isomorphic as Banach spaces. Although we will not pursue this here, we will see in Section~\ref{Op-alg} that it is easy to construct examples where two of these spaces are linearly isomorphic, but not isomorphic as algebras. 

One direction of the Gelfand--Kolmogorov theorem carries over to the current setting. If $AC(\sigma_1)$ and $AC(\sigma_2)$ are isomorphic as algebras then  the
sets $\sigma_1$ and $\sigma_2$ must be homeomorphic subsets of $\mC$ (see \cite[Theorem 2.6]{DL2}). 
The converse direction was shown to fail since the spaces of absolutely continuous functions on the closed disc and the closed square are not isomorphic. In a positive direction, the spaces for any two closed polygons are necessarily isomorphic Banach algebras. (Indeed, this result can be extended to a more general class of sets consisting of polygonal regions with polygonal holes.)

The aim of this paper is to examine the situation where $\sigma$ is the spectrum of a compact operator, and more particularly where $\sigma$ is a compact countable subset of the plane with a single limit point. All sets in this latter category are of course homeomorphic, but we shall show that one can get infinitely many non-isomorphic $AC(\sigma)$ spaces from such sets. On the other hand, if we require that these sets are subsets of $\mR$, then there are exactly two such spaces up to isomorphism. 

The $AC(\sigma)$ spaces are defined as subalgebras of spaces of functions of bounded variation, denoted by $BV(\sigma)$. 
For general sets $\sigma$ the isomorphisms between $BV(\sigma)$ spaces has been much less studied. Certainly isomorphisms of such spaces need not be associated with homeomorphisms of the domains of the function spaces (see Example~\ref{bv-no-hom} below). On the other hand, for the limited range of sets that we are working with here, $BV(\sigma)$ is not much bigger than $AC(\sigma)$ and many of the proofs can be adapted to give a result about these larger spaces.

\section{Preliminaries}\label{prelims}

In this section we shall briefly outline the definition of the spaces $\AC(\sigma)$ and $BV(\sigma)$.  Throughout, $\sigma$, $\sigma_1$ and $\sigma_2$ will denote nonempty compact subsets of the plane, which, for notational convenience we shall often identify as  $\mR^2$.
We shall work throughout with algebras of complex-valued functions.

Suppose that $f: \sigma \to \mC$. Let $S =
\plist{\vecx_0,\vecx_1,\dots,\vecx_n}$ be a finite ordered list of elements of $\sigma$, where, for the moment, we shall assume that $n \ge 1$. Let $\gamma_S$ denote the piecewise linear curve joining the points of $S$ in order.
Note that the elements of such a list do not need to be distinct.

The \textit{curve variation of $f$ on the set $S$} is defined to be
\begin{equation} \label{lbl:298}
    \cvar(f, S) =  \sum_{i=1}^{n} \abs{f(\vecx_{i}) - f(\vecx_{i-1})}.
\end{equation}
Unless $f$ is constant, this quantity can be made arbitrarily large by taking $S$ to consist of a repeating sequence of points on which $f$ differs. To deal with this we associate to each list $S$ a variation factor $\vf(S)$. Loosely speaking, this is the greatest number of times that $\gamma_S$ crosses any line in the plane.
To make this more precise we need the concept of a crossing segment.

\begin{defn}\label{crossing-defn}
Suppose that $\ell$ is a line in the plane. We say that $\ls[\vecx_i,\vecx_{i+1}]$, the line segment joining $\vecx_i$ to $\vecx_{i+1}$, is a \textit{crossing segment} of $S = \plist{\vecx_0,\vecx_1,\dots,\vecx_n}$ on $\ell$ if any one of the following holds:
\begin{enumerate}[(i)]
  \item $\vecx_i$ and $\vecx_{i+1}$ lie on (strictly) opposite sides of $\ell$.
  \item $i=0$ and $\vecx_i \in \ell$.
  \item $i > 0$, $\vecx_i \in \ell$ and $\vecx_{i-1} \not\in \ell$.
  \item $i=n-1$, $\vecx_i \not\in \ell$ and $\vecx_{i+1} \in \ell$.
\end{enumerate}
In this case we shall write $\ls[\vecx_i,\vecx_{i+1}] \in X(S,\ell)$.
\end{defn}

\begin{defn}\label{vf-defn}
Let $\vf(S,\ell)$ denote the number of crossing segments of $S$ on $\ell$. The \textit{variation factor} of $S$ is defined to be
 \[ \vf(S) = \max_{\ell} \vf(S,\ell). \]
\end{defn}

Clearly $1 \le \vf(S) \le n$. For completeness, in the case that
$S = \plist{\vecx_0}$ we set $\cvar(f, \plist{\vecx_0}) = 0$ and let $\vf(\plist{\vecx_0},\ell) = 1$ whenever $\vecx_0 \in \ell$.

\begin{defn}\label{2d-var}
The \textit{two-dimensional variation} of a function $f : \sigma
\rightarrow \mC$ is defined to be
\begin{equation} \label{lbl:994}
    \var(f, \sigma) = \sup_{S}
        \frac{ \cvar(f, S)}{\vf(S)},
\end{equation}
where the supremum is taken over all finite ordered lists of elements of $\sigma$.
\end{defn}

The \textit{variation norm} of such a function is
  \[ \normbv{f} = \norm{f}_\infty + \var(f,\sigma) \]
and the set of functions of bounded variation on $\sigma$ is
  \[ \BV(\sigma) = \{ f: \sigma \to \mC \st \normbv{f} < \infty\}. \]
The space $\BV(\sigma)$ is a Banach algebra under pointwise operations \cite[Theorem 3.8]{AD1}. If $\sigma = [a,b] \subseteq \mR$ then the above definition is equivalent to the more classical one.

Let $\Pol_2$ denote the space of polynomials in two real variables of the form $p(x,y) = \sum_{n,m} c_{nm} x^n y^m$, and let $\Pol_2(\sigma)$ denote the restrictions of elements on $\Pol_2$ to $\sigma$. The algebra $\Pol_2(\sigma)$ is always a subalgebra of $\BV(\sigma)$ \cite[Corollary 3.14]{AD1}.

\begin{defn}
The set of \textit{absolutely continuous} functions on $\sigma$, denoted $\AC(\sigma)$, is the closure of $\Pol_2(\sigma)$ in $\BV(\sigma)$.
\end{defn}

The set $\AC(\sigma)$ forms a closed subalgebra of $\BV(\sigma)$ and hence is a Banach algebra. Again, if $\sigma = [a,b]$ this definition reduces to the classical definition. 

More generally, we always have that $C^1(\sigma) \subseteq AC(\sigma) \subseteq C(\sigma)$, where one interprets $C^1(\sigma)$ as consisting of all functions for which there is a $C^1$ extension to an open neighbourhood of $\sigma$ (see \cite{DL1}).

\section{Locally piecewise affine maps}

It is essentially a consequence of the classical Banach--Stone Theorem that any algebra isomorphism between two $AC(\sigma)$ spaces must take the form of a  composition operator determined by a homeomorphism. (Note that this is not true for the $BV(\sigma)$ spaces.)

\begin{thm}[{\cite[Theorem~2.6]{DL2}}]\label{hom}
Suppose that $\sigma_1$ and $\sigma_2$ are nonempty compact subsets of the plane. If $\Phi: AC(\sigma_1) \to AC(\sigma_2)$ is an isomorphism then there exists a homeomorphism $h:\sigma_1 \to \sigma_2$ such $\Phi(f) = f \circ h^{-1}$ for all $f \in AC(\sigma_1)$.
\end{thm}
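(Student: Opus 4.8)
The plan is to run the classical Gelfand--Kolmogorov argument in this setting: first identify the characters of $\AC(\sigma)$ with point evaluations, so that the carrier space is homeomorphic to $\sigma$ itself, and then transport this identification through $\Phi$. Since $\AC(\sigma)$ is a commutative unital Banach algebra, every character (nonzero multiplicative linear functional) $\psi$ is automatically continuous, so I never need to assume a priori that $\Phi$ is continuous. As $\Pol_2(\sigma)$ is dense in $\AC(\sigma)$ and is generated, as an algebra, by the constants together with the two coordinate functions $u(x,y)=x$ and $v(x,y)=y$, a character is completely determined by the pair $(\psi(u),\psi(v))\in\mC^2$. First I would show these values are real: for $\lambda\in\mC\setminus\mR$ the function $(x,y)\mapsto(x-\lambda)^{-1}$ is smooth on a neighbourhood of $\sigma$, hence lies in $C^1(\sigma)\subseteq\AC(\sigma)$ and inverts $u-\lambda$; thus $\psi(u)\neq\lambda$, forcing $\psi(u)\in\mR$, and similarly $\psi(v)\in\mR$.

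Next I would show that $p:=(\psi(u),\psi(v))$ lies in $\sigma$. For any real $(a,b)\notin\sigma$, compactness gives $\delta>0$ with $(x-a)^2+(y-b)^2\geq\delta$ on $\sigma$, so $\phi=(u-a)^2+(v-b)^2$ has a smooth reciprocal near $\sigma$ and is therefore invertible in $\AC(\sigma)$; hence $\psi(\phi)=(\psi(u)-a)^2+(\psi(v)-b)^2\neq0$, so $p\neq(a,b)$. Combined with the reality of $(\psi(u),\psi(v))$ this places $p\in\sigma$. Finally, choosing polynomials $p_n\in\Pol_2(\sigma)$ with $p_n\to f$ in $\BV(\sigma)$ (hence uniformly, as $\norminf{\cdot}\leq\normbv{\cdot}$) and using the continuity and multiplicativity of $\psi$ gives $\psi(f)=\lim_n p_n(p)=f(p)$. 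Thus $\psi=\varepsilon_p$, evaluation at $p$. Conversely each $\varepsilon_p$ is a character, and $p\mapsto\varepsilon_p$ is a continuous bijection from the compact set $\sigma$ onto the Hausdorff carrier space in its weak-$*$ topology, hence a homeomorphism.

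For the transport, take a character $\psi$ of $\AC(\sigma_2)$; then $\psi\circ\Phi$ is linear, multiplicative, and nonzero (as $\Phi$ is surjective), so it is a character of $\AC(\sigma_1)$. The assignment $\psi\mapsto\psi\circ\Phi$ is weak-$*$ continuous straight from the definition, with inverse $\psi\mapsto\psi\circ\Phi^{-1}$ equally continuous, so it is a homeomorphism of carrier spaces. Transporting through the identifications above yields a homeomorphism $g:\sigma_2\to\sigma_1$ determined by $\varepsilon_y\circ\Phi=\varepsilon_{g(y)}$. Setting $h=g^{-1}$ and evaluating at $y\in\sigma_2$ gives $(\Phi f)(y)=(\varepsilon_y\circ\Phi)(f)=\varepsilon_{g(y)}(f)=f(h^{-1}(y))$, which is exactly the asserted form.

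The main obstacle is the second step, namely showing there are no characters beyond point evaluations. This rests entirely on $\AC(\sigma)$ being rich enough to contain reciprocals of nowhere-vanishing $C^1$ functions, for which I would invoke the inclusion $C^1(\sigma)\subseteq\AC(\sigma)$ recorded in Section~\ref{prelims}. One must be careful that invertibility really forces $p\in\sigma$ rather than merely into some larger polynomially convex hull of $\sigma$; it is precisely the two-variable localizing function $\phi$ that rules this out. Everything else (automatic continuity of characters, weak-$*$ bicontinuity of the transpose map, and weak-$*$ compactness of the carrier space) is standard Banach-algebra machinery and, notably, requires no continuity hypothesis on $\Phi$ itself.
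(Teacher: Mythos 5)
The paper gives no proof of Theorem~\ref{hom}: it is quoted from \cite[Theorem~2.6]{DL2}, with only the remark that it is ``essentially a consequence of the classical Banach--Stone Theorem.'' Your Gelfand--Kolmogorov argument is correct and self-contained, and it follows the same character-space route that the cited source relies on; the only real difference is that you re-derive the identification of the carrier space of $AC(\sigma)$ with $\sigma$ rather than quoting it. The ingredients all check out against what the paper records: characters of the unital commutative Banach algebra $AC(\sigma)$ are automatically continuous; $\Pol_2(\sigma)$ is dense and generated by $1,u,v$, so a character is pinned down by $(\psi(u),\psi(v))$; the inclusion $C^1(\sigma)\subseteq AC(\sigma)$ from Section~\ref{prelims} supplies inverses of $u-\lambda$ for $\lambda\in\mC\setminus\mR$ and of the localizer $(u-a)^2+(v-b)^2$ for real $(a,b)\notin\sigma$, which together force $\psi=\varepsilon_p$ for some $p\in\sigma$ (and, as you rightly flag, it is precisely this two-variable localizer that prevents $p$ from landing merely in a polynomially convex hull); and the transpose map $\psi\mapsto\psi\circ\Phi$ is a weak-$*$ homeomorphism of carrier spaces without any continuity hypothesis on $\Phi$, yielding $h$ with $\Phi(f)=f\circ h^{-1}$. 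What your approach buys over the bare citation is an explicit, elementary verification that point evaluations exhaust the characters of $AC(\sigma)$, which is the only non-formal step in the whole argument.
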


Not all homeomorphisms $h : \sigma_1 \to \sigma_2$ produce algebra isomorphisms, but a large class of suitable maps can be obtained by taking compositions of what are known as locally piecewise affine maps.

Let $\alpha: \mR^2 \to \mR^2$ be an invertible affine map, and let $C$ be a convex $n$-gon. Then $\alpha(C)$ is also a convex $n$-gon. Denote the sides of $C$ by $s_1,\dots,s_n$. Suppose that $\vecx_0 \in \interior{C}$. The point $\vecx_0$ determines a triangulation $T_1,\dots,T_n$ of $C$, where $T_j$ is the (closed) triangle with side $s_j$ and vertex $\vecx_0$. A point $\vecy_0 \in \interior{\alpha(C)}$ determines a similar triangularization $\hat{T}_1,\dots,\hat{T}_n$ of $\alpha(C)$, where the numbering is such that $\alpha(s_j) \subseteq \hat{T}_j$.

\begin{lem}\label{vertex-move} With the notation as above, there is a unique map $h: \mR^2 \to \mR^2$ such that
\begin{enumerate}
 \item $h(\vecx) = \alpha(\vecx)$ for $\vecx \not\in \interior{C}$,
 \item $h$ maps $T_j$ onto $\hat{T}_j$, for $1 \le j \le n$.
 \item $\alpha_j = h|T_j$ is affine, for $1 \le j \le n$.
 \item $h(\vecx_0) = \vecy_0$.
\end{enumerate}
\end{lem}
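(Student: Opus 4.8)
The plan is to build $h$ triangle-by-triangle, exploiting the fact that an affine self-map of $\mR^2$ is uniquely determined by its action on any three affinely independent points. Write $\vecv_0,\dots,\vecv_{n-1}$ for the vertices of $C$, labelled cyclically so that the side $s_j$ has endpoints $\vecv_{j-1}$ and $\vecv_j$ (indices read modulo $n$, with $\vecv_{-1}=\vecv_{n-1}$). Then $T_j$ is the triangle with vertices $\vecx_0,\vecv_{j-1},\vecv_j$; since $\vecx_0\in\interior{C}$ and $s_j$ is a genuine side, these three points are affinely independent. Hence there is a unique affine map $\alpha_j:\mR^2\to\mR^2$ with $\alpha_j(\vecx_0)=\vecy_0$, $\alpha_j(\vecv_{j-1})=\alpha(\vecv_{j-1})$ and $\alpha_j(\vecv_j)=\alpha(\vecv_j)$. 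I would then define $h$ by setting $h=\alpha_j$ on $T_j$ for each $j$ and $h=\alpha$ on $\mR^2\setminus\interior{C}$.

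Next I would check that this prescription is consistent. The triangles $T_j$ and $T_{j+1}$ meet exactly in the segment from $\vecx_0$ to $\vecv_j$, and on this segment $\alpha_j$ and $\alpha_{j+1}$ agree, because both send $\vecx_0\mapsto\vecy_0$ and $\vecv_j\mapsto\alpha(\vecv_j)$, while an affine map is determined on a segment by its two endpoints. Along the outer side $s_j$ the map $\alpha_j$ agrees with $\alpha$ for the same reason, so the triangle pieces match the outside value $\alpha$ on $\partial C$. Since the $T_j$ tile $C$ with pairwise disjoint interiors (immediate from convexity and $\vecx_0\in\interior{C}$), the map $h$ is well-defined and continuous on all of $\mR^2$; this gives properties (1), (3) and (4) directly.

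For property (2) I would observe that $\hat{T}_j$ is, by its construction, the triangle with vertices $\vecy_0,\alpha(\vecv_{j-1}),\alpha(\vecv_j)$, which are precisely the images of the vertices of $T_j$ under $\alpha_j$. Because $\vecy_0\in\interior{\alpha(C)}$ and $\alpha(s_j)$ is a genuine side of $\alpha(C)$, these three image points are affinely independent, so $\alpha_j$ is invertible and carries the convex hull $T_j$ onto the convex hull $\hat{T}_j$. Uniqueness then follows since any $h$ satisfying (1), (3) and (4) must take the prescribed values $\vecy_0,\alpha(\vecv_{j-1}),\alpha(\vecv_j)$ at the three affinely independent vertices of $T_j$; an affine map is determined by those values, and outside $\interior{C}$ it must equal $\alpha$ by (1).

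I expect the only real care to be needed in the gluing step: one must confirm that the separately defined affine pieces agree on every shared interior edge and along $\partial C$, so that no inconsistency is introduced. Correctly identifying $\hat{T}_j$ as the triangle on the vertices $\vecy_0,\alpha(\vecv_{j-1}),\alpha(\vecv_j)$, and hence matching the vertex labels in domain and range, is the point demanding attention; once the cyclic labelling is fixed, the remaining verification is routine linear algebra.
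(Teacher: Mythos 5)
Your proof is correct. The paper states this lemma without proof (treating the construction of locally piecewise affine maps as known from the cited earlier work), and your argument --- defining each $\alpha_j$ by its values at the three affinely independent vertices of $T_j$, verifying agreement on the shared edges $\lso[\vecx_0,{}]{\vecv_j}$ and on $\partial C$ so the pieces glue, and deducing uniqueness from the fact that an affine map of $\mR^2$ is determined by its values at three affinely independent points --- is precisely the standard verification one would supply.
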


We shall say that $h$ is the \textit{locally piecewise affine} map determined by $(C,\alpha, \vecx_0,\vecy_0)$.

The important property of locally piecewise affine maps is that they preserve the isomorphism class of $AC(\sigma)$ spaces. (Explicit bounds on the norms of the isomorphisms are given in \cite{DL2}, but we shall not need these here. In any case, the known bounds are unlikely to be sharp.)

\begin{thm}\cite[Theorem 5.5]{AD2}\label{lpam-isom}
Suppose that $\sigma$ is a nonempty compact subset of the plane, and that $h$ is a locally piecewise affine map. Then $\BV(\sigma) \simeq \BV(h(\sigma))$ and $\AC(\sigma) \simeq \AC(h(\sigma))$.
\end{thm}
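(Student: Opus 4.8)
The plan is to show that the composition operator $\Phi(f) = f\circ h^{-1}$ implements both isomorphisms. Since $h$ is a piecewise affine bijection of $\mR^2$ that is affine ($=\alpha$) off the convex polygon $C$ and affine on each triangle of the fan triangulation inside $C$, it is a homeomorphism of the plane whose inverse $h^{-1}$ is again locally piecewise affine (determined by the data $(\alpha(C),\alpha^{-1},\vecy_0,\vecx_0)$), and it restricts to a homeomorphism $\sigma\to h(\sigma)$. Consequently $\Phi$ is a linear, multiplicative bijection from functions on $\sigma$ to functions on $h(\sigma)$ with inverse $g\mapsto g\circ h$, and $\norminf{\Phi f}=\norminf{f}$. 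The whole problem therefore reduces to controlling the two-dimensional variation. First I would observe that $h$ sets up a bijection $S\leftrightarrow h(S)$ between finite lists in $\sigma$ and in $h(\sigma)$ under which the curve variation is exactly preserved, $\cvar(\Phi f,h(S))=\cvar(f,S)$, so that
\[ \var(\Phi f,h(\sigma))=\sup_S \frac{\cvar(f,S)}{\vf(h(S))}. \]
Comparing this with $\var(f,\sigma)=\sup_S \cvar(f,S)/\vf(S)$, boundedness of $\Phi$ follows at once from a purely geometric estimate: there is a constant $K$, depending only on $h$, with
\[ \vf(S)\le K\,\vf(h(S)) \qquad\text{for every finite list } S. \]
Applying the same estimate to the locally piecewise affine map $h^{-1}$ then bounds $\Phi^{-1}$, giving $\BV(\sigma)\simeq\BV(h(\sigma))$.

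To prove the variation-factor estimate --- the heart of the argument --- fix a list $S$ and a line $\ell$ realising $\vf(S)=\vf(S,\ell)$. The image $h(\ell)$ is a simple, bi-infinite polygonal curve: off $C$ it is carried by the affine map $\alpha$ to at most two rays, while the chord $\ell\cap C$ is sent across the fan triangulation, contributing at most $n$ further segments, so $h(\ell)$ consists of at most $M\le n+2$ linear pieces, lying on lines $\ell'_1,\dots,\ell'_M$. Because $h$ is a homeomorphism of the plane, $h(\ell)$ separates $\mR^2$ into the two images of the open half-planes bounded by $\ell$. Now each crossing segment of $S$ on $\ell$ has its two endpoints on opposite sides of $\ell$, so the corresponding \emph{straight} segment of $\gamma_{h(S)}$ joins the two images of the half-planes and must meet $h(\ell)$; at a transversal meeting point, lying on some $\ell'_j$, that straight segment crosses the line $\ell'_j$. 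Thus every crossing segment of $S$ on $\ell$ produces a crossing segment of $h(S)$ on one of the $M$ lines $\ell'_j$, and by the pigeonhole principle one of these lines, say $\ell'_{j^*}$, receives at least $\vf(S,\ell)/M$ of them. Hence $\vf(h(S))\ge\vf(h(S),\ell'_{j^*})\ge \vf(S)/M$, which is the desired estimate with $K=M$. The key point to emphasise is that, although $h$ \emph{bends} the path $\gamma_S$, the straight segment of $\gamma_{h(S)}$ has the same endpoints as the bent image and therefore still crosses $h(\ell)$; this is exactly what lets us avoid comparing $\gamma_{h(S)}$ with $h(\gamma_S)$.

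For the $\AC$ statement I would use that $\Phi$ is a bounded algebra isomorphism of the ambient $\BV$ spaces, so $\Phi\big(\overline{\Pol_2(\sigma)}\big)=\overline{\Phi(\Pol_2(\sigma))}$; it then suffices to check that $\Phi$ and $\Phi^{-1}$ carry polynomials into the respective $\AC$ spaces. For $p\in\Pol_2$, the function $p\circ h^{-1}$ is continuous and equals a polynomial on each affine piece of $h^{-1}$. Writing $h^{-1}=(g_1,g_2)$ reduces this, since $\AC(h(\sigma))$ is a closed algebra containing the affine functions, to showing that the continuous piecewise-affine coordinate functions $g_1,g_2$ lie in $\AC(h(\sigma))$. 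A continuous piecewise-affine function is a finite lattice combination of affine functions, and via $\max(a,b)=\tfrac12(a+b)+\tfrac12\abs{a-b}$ this comes down to the single fact that $\abs{b}\in\AC(\sigma)$ for every real affine $b$ --- which holds because $\abs{\cdot}$ is (classically) absolutely continuous on any interval containing $b(\sigma)$ and composition of such a one-dimensional $\AC$ function with the affine functional $b$ stays in $\AC(\sigma)$. Granting this, both $\Phi(\Pol_2(\sigma))\subseteq\AC(h(\sigma))$ and $\Phi^{-1}(\Pol_2(h(\sigma)))\subseteq\AC(\sigma)$ hold, and taking closures shows $\Phi$ restricts to a Banach-algebra isomorphism $\AC(\sigma)\simeq\AC(h(\sigma))$.

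The main obstacle is the variation-factor inequality, and within it the bookkeeping for degenerate incidences. I have described the transversal case (crossing segments of type (i) in Definition~\ref{crossing-defn}); the boundary conventions (ii)--(iv), where list points lie exactly on $\ell$, must be tracked through $h$ as well, and the cleanest way to handle them uniformly is to interpret $\vf(S,\ell)$ as counting the sign changes of the sequence of sides of $\vecx_0,\dots,\vecx_n$ relative to $\ell$ under a fixed tie-breaking rule for on-line points; this description is manifestly compatible with passing to $h(\ell)$ and reduces all four clauses to the same pigeonhole count. The other point needing care is the bound $M\le n+2$ on the number of linear pieces of $h(\ell)$, i.e. that a straight chord meets only boundedly many triangles of the fan; this is where convexity of $C$ and the fixed combinatorics of the triangulation are used.
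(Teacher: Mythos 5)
The paper does not prove this statement; it is quoted verbatim from \cite[Theorem 5.5]{AD2}, so there is no internal proof to compare against. Your reconstruction follows what is essentially the standard route for such results in \cite{AD1,AD2}: the composition operator $\Phi(f)=f\circ h^{-1}$ preserves $\cvar$ exactly, so everything reduces to the geometric estimate $\vf(S)\le K\,\vf(h(S))$, which you obtain by noting that the image of a line under $h$ is a polygonal curve with at most $n+2$ linear pieces and running a pigeonhole count of crossings; the $AC$ statement then follows by checking that $\Phi$ and $\Phi^{-1}$ map $\Pol_2$ into the respective $AC$ spaces. This is sound, and your observation that one compares the \emph{straight} segment $\ls[h(\vecx_i),h(\vecx_{i+1})]$ with the separating curve $h(\ell)$ (rather than trying to relate $\gamma_{h(S)}$ to $h(\gamma_S)$) is exactly the right point to isolate.

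Two caveats. First, the degenerate cases are the only place where your argument is not yet a proof, and your proposed fix is not quite right: $\vf(S,\ell)$ as given in Definition~\ref{crossing-defn} is \emph{not} equal to a count of sign changes under any fixed tie-breaking rule (e.g.\ for $S=[\veca,\vecp,\vecb]$ with $\vecp\in\ell$ and $\veca,\vecb$ strictly on the same side, the definition gives $\vf(S,\ell)=1$, whereas either tie-break gives $0$ or $2$). The two quantities agree only up to bounded factors, so this costs you a constant rather than the theorem, but you should either prove the comparison you need or handle clauses (ii)--(iv) directly (the extra difficulty is that $\vecx_{i-1}\notin\ell$ does not force $h(\vecx_{i-1})\notin\ell'_j$, since $\ell'_j$ extends beyond $h(\ell)$). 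Second, for the $AC$ part, the fact you reduce to --- that continuous piecewise affine (``piecewise planar'') functions belong to $AC(\sigma)$ --- is already established in \cite{AD1} (the density of such functions in $AC(\sigma)$ is one of the main approximation results there), so the lattice-identity argument via $\max(a,b)=\tfrac12(a+b)+\tfrac12\abs{a-b}$ can be replaced by a citation; as written it additionally relies on an unproved composition lemma ($\phi\circ b\in AC(\sigma)$ for one-variable absolutely continuous $\phi$ and affine $b$), which is also in \cite{AD1} but should be referenced explicitly.
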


\begin{figure}[!ht]
\begin{center}
\begin{tikzpicture}[scale=0.6]
%
%
 \draw[thick,black] (-3,-1.5) -- (1.5,3) -- (-0.25,4.25) -- (-4.25,0.25) -- (-3,-1.5);
 \draw[blue,dashed]  (-3,-1.5) -- (-3,0) -- (1.5,3);
 \draw[blue,dashed]  (-0.25,4.25) -- (-3,0) -- (-4.25,0.25);
 \draw[fill,green!10] (2,-1) circle(1.3);
 \draw[black] (2,-1) circle(1.3);
 \draw (2,-1) node {$\sigma_1$};
 \draw[red,<->] (-4.5,0) -- (4,0) ;
 \draw[red,<->] (0,-2) -- (0,4.5);
 \draw[red] (-3,0) node[circle, draw, fill=black!50,inner sep=0pt, minimum width=4pt] {};
 \draw[black] (-3.2,0.4) node {$\vecx$};
 \draw[red] (0,3) node[circle, draw, fill=black!50,inner sep=0pt, minimum width=4pt] {};
 \draw[black] (0,3) node[right] {$\vecy$};
 \draw[black] (2,3) node[right] {$C$};

  \path[thick,->] (4.5,3) edge [bend left] (6.0,3);
  \draw[black] (5.25,3.3) node[above] {$h$};

 \draw[thick,black] (7,-1.5) -- (11.5,3) -- (9.75,4.25) -- (5.75,0.25) -- (7,-1.5);
 \draw[blue,dashed]  (7,-1.5) --(10,3) -- (11.5,3);
 \draw[blue,dashed]  (9.75,4.25) -- (10,3) -- (5.75,0.25);
 \draw[fill,green!10] (12,-1) circle(1.3);
 \draw[black] (12,-1) circle(1.3);
 \draw (12,-1) node {$\sigma_1$};
 \draw[red,<->] (5.5,0) -- (14,0) ;
 \draw[red,<->] (10,-1) -- (10,4.5);
 \draw[black] (12,3) node[right] {$C$};
 \draw[red] (7,0) node[circle, draw, fill=black!50,inner sep=0pt, minimum width=4pt] {};
 \draw[black] (7,0.1) node[above] {$\vecx$};
 \draw[red] (10,3) node[circle, draw, fill=black!50,inner sep=0pt, minimum width=4pt] {};
 \draw[black] (10,2.6) node[right] {$\vecy$};

\end{tikzpicture}
\caption{A locally piecewise affine map $h$ moving $\vecx$ to $\vecy$.}\label{lpam-pic}
\end{center}
\end{figure}

For most applications it suffices to restrict one's attention to locally piecewise affine maps where the map $\alpha$ is the identity. This allows you to move certain parts of $\sigma$ while leaving other parts fixed.  In particular, if $\sigma_1$ is a compact set and $\vecx $ and $\vecy $ are points in the complement of $\sigma_1$ which can be joined by a polygonal path which avoids $\sigma_1$, then $BV(\sigma_1 \cup \{\vecx \}) \simeq BV(\sigma_1 \cup \{\vecy \})$ and $AC(\sigma_1 \cup \{\vecx \}) \simeq AC(\sigma_1 \cup \{\vecy \})$ (see Figure~\ref{lpam-pic}). This would be sufficient to prove our main theorem in Section~\ref{S:C-sets}, but in the next section we shall prove a more general result which removes the requirement that there be a path from $\vecx $ to $\vecy $.

The following example shows that there are isomorphisms of $BV(\sigma)$ spaces which are not induced by homeomorphisms of the domains $\sigma_1$ ans $\sigma_2$.

\begin{ex}\label{bv-no-hom}
Let $\sigma_1 = \sigma_2 = \{0\} \cup \{\frac{1}{n}\}_{n=1}^\infty$. Define $h: \sigma_1 \to \sigma_2$ by
  \[ h(\vecx) = \begin{cases}
                 1, & \vecx = 0, \\
                 0, & \vecx = 1, \\
                 \vecx,  & \text{otherwise}.
                \end{cases} \]
and for  $f \in BV(\sigma_1)$ let $\Phi(f): \sigma_2 \to \mC$ be $\Phi(f) = f \circ h^{-1}$. A simple calculation shows that $\frac{1}{3} \var(f,\sigma_1) \le \var(\Phi(f),\sigma_2) \le 3 \var(f,\sigma_1)$ and so $\Phi$ is a Banach algebra isomorphism from $BV(\sigma_1)$ to $BV(\sigma_2)$. The map $h$ is of course not a homeomorphism. 
\end{ex}

On the other hand, as in the example, all isomorphisms of $BV(\sigma)$ spaces do come from composition with a bijection of the two domains.

\begin{thm}\label{BV-bij}
 Suppose that $\sigma_1$ and $\sigma_2$ are nonempty compact subsets of the plane. If $\Phi: BV(\sigma_1) \to BV(\sigma_2)$ is an isomorphism then there exists a bijection $h:\sigma_1 \to \sigma_2$ such $\Phi(f) = f \circ h^{-1}$ for all $f \in BV(\sigma_1)$.
\end{thm}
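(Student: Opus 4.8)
The plan is to mimic the structure of Theorem~\ref{hom}, extracting the bijection $h$ from the algebraic structure of $BV(\sigma)$ rather than its topology. The key is to identify a purely algebraic characterization of the point-evaluation homomorphisms. For each $\vecx \in \sigma_1$, evaluation $\varepsilon_\vecx : f \mapsto f(\vecx)$ is a nonzero multiplicative linear functional (a character) on $BV(\sigma_1)$. If I can show that \emph{every} character of $BV(\sigma)$ is of this form, then the adjoint map $\Phi^*$, which carries characters of $BV(\sigma_2)$ to characters of $BV(\sigma_1)$ bijectively (since $\Phi$ is an algebra isomorphism), will induce the desired bijection $h : \sigma_1 \to \sigma_2$. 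Concretely, $\Phi^*(\varepsilon_{h(\vecx)}) = \varepsilon_\vecx$ should pin down $h$, and unwinding the definitions gives $\Phi(f)(h(\vecx)) = f(\vecx)$, i.e. $\Phi(f) = f \circ h^{-1}$.

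First I would verify that the point evaluations are exactly the characters of $BV(\sigma)$. One inclusion is immediate: each $\varepsilon_\vecx$ is a character. For the converse, let $\psi$ be any character; I would argue that $\psi$ must coincide with some $\varepsilon_\vecx$. The natural route is to consider the coordinate functions. Writing $u, v \in BV(\sigma)$ for the restrictions to $\sigma$ of the two coordinate projections (these lie in $\Pol_2(\sigma) \subseteq BV(\sigma)$), set $\vecx = (\psi(u), \psi(v)) \in \mC^2$. The claim is that $\vecx \in \sigma$ and $\psi = \varepsilon_\vecx$. To see $\vecx \in \sigma$, suppose not; then the function $g(\vecw) = \abs{\vecw - \vecx}^2$, which is a real polynomial in the coordinates and hence lies in $BV(\sigma)$, is bounded below by a positive constant on $\sigma$, so $g$ is invertible in $BV(\sigma)$. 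But $\psi(g) = \abs{\psi(u) - \psi(u)}^2 + \abs{\psi(v) - \psi(v)}^2 = 0$, contradicting that characters send invertible elements to nonzero scalars. Hence $\vecx \in \sigma$. Then $\psi$ and $\varepsilon_\vecx$ agree on all of $\Pol_2(\sigma)$, and since characters are automatically continuous on a Banach algebra, they agree on the closure $AC(\sigma)$ of $\Pol_2(\sigma)$.

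\textbf{The main obstacle} is precisely that last step: polynomials are dense in $AC(\sigma)$ but \emph{not} in $BV(\sigma)$, so agreement on $\Pol_2(\sigma)$ does not immediately force agreement on all of $BV(\sigma)$. This is the crucial difference from the $AC(\sigma)$ setting, and it is exactly what makes non-homeomorphic examples like Example~\ref{bv-no-hom} possible. To close the gap I would show directly that any character $\psi$ agreeing with $\varepsilon_\vecx$ on coordinates must send every $f \in BV(\sigma)$ to $f(\vecx)$. One approach: given $f \in BV(\sigma)$, consider $f - f(\vecx) \mathbf{1}$ and exploit that characters annihilate it iff $\psi(f) = f(\vecx)$; since $BV(\sigma)$ is an algebra closed under the operations needed to separate $\vecx$ from any other point using coordinate functions, one can build, for any $\vecx' \ne \vecx$ in $\sigma$, an idempotent or an element distinguishing the two evaluations, and use multiplicativity of $\psi$ to localize its value at $\vecx$. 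Once every character is shown to be a point evaluation, the bijectivity of $h$ follows from the fact that $\Phi^*$ restricts to a bijection on the character spaces (with inverse induced by $\Phi^{-1}$), and injectivity/surjectivity of $h$ transfer from the corresponding properties of this correspondence. Finally I would check that distinct points of $\sigma$ give distinct characters—again using that the coordinate functions separate points—so that the identification $\vecx \leftrightarrow \varepsilon_\vecx$ is genuinely bijective and $h$ is well defined.
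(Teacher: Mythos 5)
There is a genuine gap, and it sits exactly where you flagged ``the main obstacle.'' Your strategy requires that every character of $BV(\sigma)$ be a point evaluation (equivalently, that a character agreeing with $\varepsilon_{\vecx}$ on the coordinate functions must agree with it on all of $BV(\sigma)$). That statement is false. Take $\sigma=[0,1]$ and $\vecx=\tfrac12$: every function of bounded variation has one-sided limits, so $\psi(f)=\lim_{t\to (1/2)^+}f(t)$ is a bounded, nonzero, multiplicative linear functional on $BV(\sigma)$. It agrees with $\varepsilon_{1/2}$ on $\Pol_2(\sigma)$ (and on all of $AC(\sigma)$), yet $\psi(\chi_{\{1/2\}})=0\ne 1=\varepsilon_{1/2}(\chi_{\{1/2\}})$. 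So the maximal ideal space of $BV(\sigma)$ is strictly larger than $\sigma$, $\Phi^*$ only gives you a bijection between these larger spaces, and your closing sketch (``build an idempotent \dots and use multiplicativity to localize'') is aiming at a conclusion that cannot be reached: no amount of localization will rule out characters such as $\psi$ above. This is also why the resulting $h$ in the theorem is merely a bijection and not a homeomorphism (cf.\ Example~\ref{bv-no-hom}); a clean Gelfand-type correspondence would force too much.

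The paper's proof sidesteps characters entirely and works with minimal idempotents. For every $z\in\sigma_1$ (isolated or not) the function $\chi_{\{z\}}$ lies in $BV(\sigma_1)$, and these are exactly the idempotents that cannot be written as a sum of two nonzero idempotents. An algebra isomorphism preserves this property, so $\Phi(\chi_{\{z\}})=\chi_{\{h(z)\}}$ for a uniquely determined $h(z)\in\sigma_2$; applying the same argument to $\Phi^{-1}$ shows $h$ is a bijection, and then $\Phi(f)\,\chi_{\{h(z)\}}=\Phi(f\chi_{\{z\}})=f(z)\chi_{\{h(z)\}}$ gives $\Phi(f)=f\circ h^{-1}$. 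If you want to keep your character-theoretic framing, the repair is to characterize the point evaluations among all characters as precisely those not annihilating some minimal idempotent --- but that reduces to the idempotent argument anyway.
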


\begin{proof}
 Since $\Phi$ is an algebra isomorphism, it must map idempotents to idempotents. Note that for all $z \in \sigma_1$, the function $f_z = \chi_{\{z\}}$ lies in $BV(\sigma_1)$ and hence $g_z = \Phi(f_z)$ is an idempotent in $BV(\sigma_2)$. Since $\Phi$ is one-to-one, $g_z$ is not the zero function and hence the support of $g_z$ is a nonempty set $\tau \subseteq \sigma_2$. If $\tau$ is more than a singleton then we can choose $w \in \tau$ and write $g_z = \chi_{\{w\}} + \chi_{S\tau\setminus \{w\}}$ as a sum of two nonzero idempotents in $BV(\sigma_2)$. But then $f_z = \Phi^{-1}(\chi_{\{w\}}) + \Phi^{-1}(\chi_{S\setminus \{w\}})$ is the sum of two nonzero idempotent in $BV(\sigma_1)$ which is impossible. It follows that $g_z$ is the characteristic function of a singleton set and this clearly induces a map $h: \sigma_1 \to \sigma_2$ so that $\Phi(f_z) = \chi_{\{h(z)\}}$. Indeed, by considering $\Phi^{-1}$ it is clear that $h$ must be a bijection between the two sets.
\end{proof}

\section{Isolated points}

In general, calculating $\norm{f}_{BV(\sigma)}$, or indeed checking that a function $f$ is in $AC(\sigma)$ can be challenging. One way to simplify things is to break $\sigma$ into smaller pieces and then deal with the restrictions of $f$ to these pieces. If $\sigma_1$ is a compact subset of $\sigma$ and $f \in AC(\sigma)$ then it is easy to check that $f|\sigma_1 \in AC(\sigma_1)$ and $\norm{f|\sigma_1}_{BV(\sigma_1)} \le \norm{f}_{BV(\sigma)}$. However there are simple examples (see \cite[Example~3.3]{DL1}) where $\sigma = \sigma_1 \cup \sigma_2$, $f|\sigma_1 \in AC(\sigma_1)$, $f|\sigma_2 \in AC(\sigma_2)$, but $f \not\in BV(\sigma)$.

If one has disjoint sets $\sigma_1$ and $\sigma_2$, then the situation is rather better. Writing $\sigma =\sigma_1 \cup \sigma_2$  one essentially has \cite[Corollary~5.3]{DL1} that $AC(\sigma) = AC(\sigma_1) \oplus AC(\sigma_2)$.  To formally make sense of this one needs to identify $AC(\sigma_1)$ with the set $\{f \in AC(\sigma) \st \supp(f) \subseteq \sigma_1\}$. This requires that if one extends a function $g \in AC(\sigma_1)$ to all of $\sigma$ by making it zero on $\sigma_2$, then the extended function is absolutely continuous. While this is indeed always true, the constant $C_{\sigma_1,\sigma_2}$ such that $\norm{f}_{BV(\sigma)} \le C_{\sigma_1,\sigma_2} \norm{f|\sigma_1}_{BV(\sigma_1)}$ depends on the geometric configuration of the two sets, and is not bounded by any universal constant. 

For what we need later in the paper, we shall just need to consider the special case where $\sigma_2$ is an isolated singleton point. For the remainder of this section then assume that $\sigma_1$ is a nonempty compact subset of $\mC$, that $\vecz \not\in \sigma_1$ and that $\sigma = \sigma_1 \cup \{\vecz\}$. It is worth noting (using Proposition~4.4 of \cite{AD1} for example)  that $\chi_{\{\vecz\}}$ is always an element of $AC(\sigma)$.

For $f \in BV(\sigma)$ let 
  \[ \norm{f}_D = \norm{f}_{D(\sigma_1,\vecz)} =  \norm{f|\sigma_1}_{BV(\sigma_1)} + |f(\vecz)|. \]
(To prevent the notation from becoming too cumbersome we will usually just write $\norm{f}_{BV(\sigma_1)}$ rather than $\norm{f|\sigma_1}_{BV(\sigma_1)}$ unless there is some risk of confusion.)

\begin{prop}\label{D-norm}
The norm $\norm{\cdot}_D$ is equivalent to the usual norm $\norm{\cdot}_{BV(\sigma)}$ on $BV(\sigma)$.
\end{prop}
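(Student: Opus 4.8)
The plan is to produce constants $c_1,c_2>0$ with $c_1\normbv{f}\le\norm{f}_D\le c_2\normbv{f}$ for every $f\in\BV(\sigma)$. One inequality is immediate from the restriction property recorded at the start of this section: since $\sigma_1\subseteq\sigma$ we have $\norm{f|\sigma_1}_{BV(\sigma_1)}\le\normbv{f}$, and trivially $\abs{f(\vecz)}\le\norm{f}_\infty\le\normbv{f}$; adding these gives $\norm{f}_D\le 2\normbv{f}$. The substance of the proof is the reverse bound $\var(f,\sigma)\le C\norm{f}_D$ (together with $\norm{f}_\infty\le\norm{f}_D$, which is clear).

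To that end I would fix a finite ordered list $S=[\vecx_0,\dots,\vecx_n]$ from $\sigma$ and classify its segments $\ls[\vecx_{i-1},\vecx_i]$ into three types: (a) both endpoints in $\sigma_1$; (b) exactly one endpoint equal to $\vecz$; (c) both endpoints equal to $\vecz$. Type (c) contributes nothing to $\cvar(f,S)$. Each type (b) segment contributes $\abs{f(\vecz)-f(\vecx)}\le\abs{f(\vecz)}+\norm{f|\sigma_1}_\infty$, so if $m$ is their number their total contribution is at most $m\bigl(\abs{f(\vecz)}+\norm{f|\sigma_1}_\infty\bigr)$. For the type (a) segments, let $S_1$ be $S$ with every occurrence of $\vecz$ deleted; each type (a) segment of $S$ is also a segment of $S_1$, so the type (a) contribution is at most $\cvar(f,S_1)\le\vf(S_1)\,\var(f,\sigma_1)$. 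Combining,
\[
 \cvar(f,S)\le\vf(S_1)\,\var(f,\sigma_1)+m\bigl(\abs{f(\vecz)}+\norm{f|\sigma_1}_\infty\bigr).
\]

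To divide by $\vf(S)$ I need two geometric comparisons. First, deleting points never increases the variation factor, so $\vf(S_1)\le\vf(S)$; I would verify this by showing that replacing a pair $\ls[\vecx_{i-1},\vecz],\ls[\vecz,\vecx_{i+1}]$ by the single shortcut $\ls[\vecx_{i-1},\vecx_{i+1}]$ cannot increase the crossing count on any line $\ell$ (the shortcut crosses $\ell$ only when its endpoints are strictly separated, in which case the detour through $\vecz$ crosses $\ell$ an odd, hence positive, number of times), attending to the boundary cases of Definition~\ref{crossing-defn} where endpoints lie on $\ell$. Second, and this is the crux, I would show $m\le 2\,\vf(S)$ using that $\vecz$ is isolated: with $d=\dist(\vecz,\sigma_1)>0$, each type (b) segment joins $\vecz$ to some $\vecw_j\in\sigma_1$ in a direction $\theta_j$. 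For a unit vector $\vecu$, take the line $\ell_\vecu$ perpendicular to $\vecu$ through $\vecz+\delta\vecu$ with $\delta$ small; then the segment to $\vecw_j$ is a crossing segment of $\ell_\vecu$ exactly when $\theta_j$ lies in the open half-circle of directions about $\arg\vecu$. An averaging argument gives a half-circle containing at least half of the $m$ directions, and a generic choice of $\arg\vecu$ (avoiding the finitely many $\theta_j$ at the boundary) with $\delta$ small enough for the finite configuration yields a line crossed by at least $m/2$ of the type (b) segments, so $\vf(S)\ge\vf(S,\ell_\vecu)\ge m/2$.

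Feeding $\vf(S_1)\le\vf(S)$ and $m\le 2\vf(S)$ into the displayed inequality and dividing by $\vf(S)$ gives $\cvar(f,S)/\vf(S)\le\var(f,\sigma_1)+2\bigl(\abs{f(\vecz)}+\norm{f|\sigma_1}_\infty\bigr)$ uniformly in $S$; taking the supremum over $S$ and bounding each summand by $\norm{f}_D$ yields $\var(f,\sigma)\le 5\norm{f}_D$ and hence $\normbv{f}\le 6\norm{f}_D$. The main obstacle is precisely the estimate $m\le C\,\vf(S)$: the type (b) segments may be arbitrarily numerous, so everything hinges on converting the positive separation of $\vecz$ from $\sigma_1$ into a single line witnessing a fixed fraction of the $\vecz$-to-$\sigma_1$ transitions. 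The monotonicity $\vf(S_1)\le\vf(S)$ is conceptually easier but still demands a careful check against the crossing-segment conventions.
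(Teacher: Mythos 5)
Your proposal is correct and follows the same basic strategy as the paper: the easy bound $\norm{f}_D\le 2\normbv{f}$, then for the reverse direction a decomposition of $\cvar(f,S)$ into the contribution of segments lying in $\sigma_1$ (controlled by $\cvar(f,S_1)$ together with the monotonicity $\vf(S_1)\le\vf(S)$, which the paper simply cites from \cite[Proposition~3.5]{DL1}) plus the contribution of segments touching $\vecz$, which must be shown to be at most a constant times $\vf(S)\cdot(\norminf{f|\sigma_1}+\abs{f(\vecz)})$. The one place where you genuinely diverge is the key geometric estimate: you bound the number $m$ of segments with exactly one endpoint at $\vecz$ by $2\vf(S)$ using a line \emph{offset} from $\vecz$ by a small $\delta$ in a direction chosen by averaging, so that only the strict-separation clause (i) of Definition~\ref{crossing-defn} is invoked. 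The paper instead takes a line $\ell$ passing \emph{through} $\vecz$ and missing the other points of $S$, and uses the touching clauses (ii)--(iv) to show directly that each of the $N$ occurrences of $\vecz$ in the list contributes a crossing segment, giving $\vf(S)\ge N$ (with $m\le 2N$ implicit). The paper's choice is shorter because the clauses of the crossing-segment definition were designed precisely to count visits to a point on the line, whereas your averaging argument is self-contained and would survive in a setting where only transversal crossings are counted; both are valid, and the only cost of your route is a slightly worse constant ($6$ in place of the paper's $3$, which the paper shows is sharp).
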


\begin{proof}
We first remark that it is clear that $\norm{\cdot}_D$ is a norm on $BV(\sigma)$. Also, noting the above remarks, 
  $ \norm{f}_D \le 2 \norm{f}_{BV(\sigma)} $
so we just need to find a suitable lower bound for $\norm{f}_D$.

Suppose then that $f \in BV(\sigma)$. Let $S = [\vecx_0,\vecx_1,\dots,\vecx_n]$ be an ordered list of points in $\sigma$ and let $S' = [\vecy_0,\dots,\vecy_m]$   be the list $S$ with those points equal to $\vecz$ omitted. Our aim is to compare $\cvar(f,S)$ with $\cvar(f,S')$. 
In calculating $\cvar(f,S)$  we may assume that no two consecutive points in this list are both equal to $\vecz$,  and that $S'$ is nonempty. Let $N$ be the number of times that the point $\vecz$ occurs in the list $S$.

Now if $\vecx_k = \vecz$ for some $0 < k < n$ then
  \begin{multline*}
   |f(\vecx_k) - f(\vecx_{k-1})| + |f(\vecx_{k+1}) - f(\vecx_k)| \\
   \le 2 \norm{f|\sigma_1}_\infty + 2 |f(\vecz)| \\
   \le |f(\vecx_{k+1}) - f(\vecx_{k-1})| + 2 \norm{f|\sigma_1}_\infty + 2 |f(\vecz)|
  \end{multline*}
If $\vecx_0 = \vecz$ then $|f(\vecx_1)-f(\vecx_0)| \le \norm{f|\sigma_1}_\infty + |f(\vecz)|$ and a similar estimate applies if $\vecx_n = \vecz$.
Putting these together shows that\goodbreak
  \begin{align*}
   \cvar(f,S) &= \sum_{k=1}^n |f(\vecx_k) - f(\vecx_{k-1})| \\
              &\le \sum_{k=1}^m |f(\vecy_k) - f(\vecy_{k-1})| + 2N (\norm{f|\sigma_1}_\infty+|f(\vecz)|).
  \end{align*}
Let $\ell$ be any line through $\vecz$ which doesn't intersect any other points of $S$. Checking Definition~\ref{crossing-defn}, one sees that we get a crossing segment of $S$ on $\ell$ for each time that $\vecx_k = \vecz$ and so $\vf(S) \ge \vf(S,\ell) \ge N$.
By \cite[Proposition~3.5]{DL1} we also have that $\vf(S) \ge \vf(S')$.
Thus
  \begin{align*}
   \frac{\cvar(f,S)}{\vf(S)} 
     &\le \frac{\cvar(f,S') + 2N (\norm{f|\sigma_1}_\infty+|f(\vecz)|)}{\vf(S)} \\
     &\le \frac{\cvar(f,S')}{\vf(S')} + \frac{2N (\norm{f|\sigma_1}_\infty+|f(\vecz)|)}{N} \\
     &\le \var(f,\sigma_1) + 2 (\norm{f|\sigma_1}_\infty+|f(\vecz)|) \\
     &\le 2 \norm{f}_D.
   \end{align*}
Taking the supremum over all lists $S$ then shows that $\var(f,\sigma) \le 2 \norm{f}_D$ and hence that 
  \[ \norm{f}_{BV(\sigma)} = \norm{f}_\infty+  \var(f,\sigma) \le 3 \norm{f}_D \] 
which completes the proof.
\end{proof}

The constants obtained in the proof of Proposition~\ref{D-norm} are in fact sharp. Suppose that $\sigma_1 = \{-1,1\}$, $\vecz = 0$ and $\sigma = \sigma_1 \cup \{z\}$. Then $\norm{\chi_{\{0\}}}_D = 1$ while $\norm{\chi_{\{0\}}}_{BV(\sigma)} = 3$. On the other hand, if $f$ is the constant function $1$, then $\norm{f}_D = 2$ while $\norm{f}_{BV(\sigma)} =1$.

\begin{prop}\label{pres-ac}
$f \in AC(\sigma)$ if and only if $f|\sigma_1 \in AC(\sigma_1)$.
\end{prop}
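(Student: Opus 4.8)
The plan is to prove the two implications separately. The forward direction is essentially free: it is the restriction property noted at the start of this section. Indeed, if $f \in AC(\sigma)$, then since $\sigma_1$ is a compact subset of $\sigma$ we have $\norm{g|\sigma_1}_{BV(\sigma_1)} \le \norm{g}_{BV(\sigma)}$ for every $g$, so restriction carries $\Pol_2(\sigma)$ continuously into $\Pol_2(\sigma_1)$ and hence maps the closure $AC(\sigma)$ into $AC(\sigma_1)$. Thus $f|\sigma_1 \in AC(\sigma_1)$ with no further work, and all the content of the proposition lies in the converse.

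For the reverse implication, suppose $g = f|\sigma_1 \in AC(\sigma_1)$ and choose polynomials $p_n \in \Pol_2$ with $p_n|\sigma_1 \to g$ in $BV(\sigma_1)$. The naive hope would be that $p_n|\sigma \to f$ in $BV(\sigma)$, but this fails: the values $p_n(\vecz)$ at the isolated point need not converge to $f(\vecz)$, or converge at all, since $\vecz \notin \sigma_1$ and the approximation controls the $p_n$ only on $\sigma_1$. This mismatch at $\vecz$ is the \emph{only} obstacle, and it is precisely what the idempotent $\chi_{\{\vecz\}} \in AC(\sigma)$ is there to repair; identifying this as the crux, and realizing that Proposition~\ref{D-norm} lets us decouple the behaviour on $\sigma_1$ from the single value at $\vecz$, is the heart of the argument.

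Accordingly, I would set
\[ f_n = p_n|\sigma + \bigl(f(\vecz) - p_n(\vecz)\bigr)\,\chi_{\{\vecz\}}. \]
Each $f_n$ lies in $AC(\sigma)$, being a linear combination of $p_n|\sigma \in \Pol_2(\sigma) \subseteq AC(\sigma)$ and of $\chi_{\{\vecz\}} \in AC(\sigma)$. By construction $f_n(\vecz) = f(\vecz)$, while on $\sigma_1$ the correction term vanishes, so $f_n|\sigma_1 = p_n|\sigma_1$. Hence
\[ \norm{f - f_n}_D = \norm{(g - p_n)|\sigma_1}_{BV(\sigma_1)} + \abs{f(\vecz) - f_n(\vecz)} = \norm{g - p_n}_{BV(\sigma_1)} \longrightarrow 0. \]
Since $\norm{\cdot}_D$ is equivalent to $\norm{\cdot}_{BV(\sigma)}$ by Proposition~\ref{D-norm}, it follows that $f_n \to f$ in $BV(\sigma)$; as each $f_n \in AC(\sigma)$ and $AC(\sigma)$ is closed, we conclude $f \in AC(\sigma)$. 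The whole reduction succeeds because the $D$-norm legitimizes treating the value at $\vecz$ independently of $\sigma_1$, so that $\chi_{\{\vecz\}}$ can absorb the otherwise uncontrolled numbers $p_n(\vecz)$ and leave only the approximation already guaranteed on $\sigma_1$.
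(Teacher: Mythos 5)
Your proof is correct and takes essentially the same approach as the paper's: you repair the polynomial approximant at the isolated point using the idempotent $\chi_{\{\vecz\}} \in AC(\sigma)$ and then pass from the $\norm{\cdot}_D$ estimate to the $BV(\sigma)$ norm via Proposition~\ref{D-norm}, exactly as in the published argument. The only difference is cosmetic (a sequence of approximants rather than a single $\epsilon$-approximation).
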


\begin{proof} Rather than use the heavy machinery of \cite[Section~5]{DL1}, we give a more direct proof using the definition of absolute continuity. As noted above, one just needs to show that if $f|\sigma_1 \in AC(\sigma_1)$, then $f \in AC(\sigma)$. Suppose then that $f|\sigma_1 \in AC(\sigma_1)$. Given $\epsilon > 0$, there exists a polynomial $p \in \Pol_2$ such that $\norm{f-p}_{BV(\sigma_1)} < \epsilon/3$.
Define $g: \sigma \to \mC$ by $g = p + (f(\vecz)-p(\vecz)) \chi_{\{z\}}$. Since $\chi_{\{z\}} \in AC(\sigma)$, we have that $g \in AC(\sigma)$ and $\norm{f-g}_{BV(\sigma)} \le 3 \norm{f-g}_D = 3 \norm{f-p}_{BV(\sigma_1)} < \epsilon$. Since $AC(\sigma)$ is closed, this shows that $f \in AC(\sigma)$.
\end{proof}

\begin{cor}\label{isol-points}
Suppose that $\sigma_1$ is a nonempty compact subset of $\mC$ and that $\vecx$ and $\vecy$ are points in the complement of $\sigma_1$. Then 
$BV(\sigma_1 \cup \{\vecx\}) \simeq BV(\sigma_1 \cup \{\vecy\})$
$AC(\sigma_1 \cup \{\vecx\}) \simeq AC(\sigma_1 \cup \{\vecy\})$.
\end{cor}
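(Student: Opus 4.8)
The plan is to reduce everything to the decomposition result just proved (Proposition~\ref{pres-ac}), which characterises $AC(\sigma_1\cup\{\vecz\})$ purely in terms of the restriction to $\sigma_1$ together with the value at the isolated point. Since $\vecx$ and $\vecy$ both lie in the complement of $\sigma_1$, the two sets $\sigma_1\cup\{\vecx\}$ and $\sigma_1\cup\{\vecy\}$ fit the running hypotheses of this section (a fixed compact $\sigma_1$ together with one isolated point), so the structural description of both function spaces is governed by the same $\sigma_1$.

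First I would define the natural candidate map $\Phi: BV(\sigma_1\cup\{\vecx\}) \to BV(\sigma_1\cup\{\vecy\})$ by transporting values: set $(\Phi f)|\sigma_1 = f|\sigma_1$ and $(\Phi f)(\vecy) = f(\vecx)$. This is the composition operator $f \mapsto f\circ h^{-1}$ for the obvious bijection $h$ fixing $\sigma_1$ pointwise and sending $\vecx$ to $\vecy$, so $\Phi$ is manifestly linear, multiplicative and bijective (its inverse has the same form). Next I would verify boundedness in both directions. The point is that the equivalent norm $\norm{\cdot}_{D(\sigma_1,\vecx)}$ of Proposition~\ref{D-norm} depends only on $\norm{f|\sigma_1}_{BV(\sigma_1)}$ and $|f(\vecx)|$, and $\Phi$ preserves both of these quantities exactly: $\norm{\Phi f}_{D(\sigma_1,\vecy)} = \norm{f}_{D(\sigma_1,\vecx)}$. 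Hence $\Phi$ is an isometry for the $D$-norms, and by the two-sided equivalence $\frac{1}{3}\norm{\cdot}_D \le \norm{\cdot}_{BV(\sigma)} \le 2\norm{\cdot}_D$ it is bounded with bounded inverse for the genuine $BV$-norms (with explicit constant at most $6$). This establishes the first claimed isomorphism $BV(\sigma_1\cup\{\vecx\}) \simeq BV(\sigma_1\cup\{\vecy\})$.

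For the $AC$ statement I would argue that the very same $\Phi$ restricts to an isomorphism of the absolutely continuous subalgebras. By Proposition~\ref{pres-ac}, membership $f \in AC(\sigma_1\cup\{\vecx\})$ is equivalent to $f|\sigma_1 \in AC(\sigma_1)$; since $\Phi$ leaves the restriction to $\sigma_1$ unchanged, $f \in AC(\sigma_1\cup\{\vecx\})$ holds if and only if $\Phi f \in AC(\sigma_1\cup\{\vecy\})$. Thus $\Phi$ maps $AC(\sigma_1\cup\{\vecx\})$ bijectively onto $AC(\sigma_1\cup\{\vecy\})$, and the norm estimates already obtained carry over to give the Banach algebra isomorphism $AC(\sigma_1\cup\{\vecx\}) \simeq AC(\sigma_1\cup\{\vecy\})$.

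The only genuine subtlety, and the step I would treat most carefully, is confirming that $\norm{\cdot}_D$ really is insensitive to the location of the isolated point, so that $\Phi$ is an honest $D$-isometry rather than merely bounded. This is immediate from the definition of the $D$-norm, but it is exactly where the hypothesis $\vecx,\vecy \notin \sigma_1$ is used: it guarantees that in both spaces the isolated point contributes only through the single scalar $|f(\cdot)|$ and does not interact with the $BV(\sigma_1)$ seminorm. Note that, in contrast to the path-based construction via locally piecewise affine maps described after Theorem~\ref{lpam-isom}, this argument requires no polygonal path from $\vecx$ to $\vecy$ avoiding $\sigma_1$, which is precisely the added generality being sought.
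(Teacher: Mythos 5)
Your proposal is correct and follows essentially the same route as the paper: the paper also takes the composition operator induced by the bijection fixing $\sigma_1$ and sending $\vecx$ to $\vecy$, observes it is an isometry for the $D$-norms (hence bicontinuous for the $BV$-norms by Proposition~\ref{D-norm}), and invokes Proposition~\ref{pres-ac} for the $AC$ statement.
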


\begin{proof}  Let $h: \sigma_1 \cup \{\vecx\} \to \sigma_1 \cup \{\vecy\}$ be the natural homeomorphism which is the identity on $\sigma_1$ and which maps $\vecx$ to $\vecy$ and for $f \in BV(\sigma_1 \cup \{\vecx\})$ let  $\Phi(f) = f \circ h^{-1}$. Then $\Phi$ is an algebra isomorphism of $BV(\sigma_1 \cup \{\vecx\})$ onto $BV(\sigma_1 \cup \{\vecy\})$ which is isometric under the norms $\norm{\cdot}_{D(\sigma_1,\vecx)}$ and $\norm{\cdot}_{D(\sigma_1,\vecy)}$, and hence it is certainly bicontinuous under the respective $BV$ norms.

It follows immediately from Proposition~\ref{pres-ac} that $\Phi$ preserves absolute continuity as well. 
\end{proof}

More generally of course, this result says that one can move any finite number of isolated points around the complex plane without altering the isomorphism class of these spaces.

\section{$C$-sets}\label{S:C-sets}

The spectrum of a compact operator is either finite or else a countable set with limit point 0. If $\sigma$ has $n$ elements then $AC(\sigma)$ is an $n$-dimensional algebra and consequently for finite sets, one has a trivial Banach--Stone type theorem: $AC(\sigma_1) \simeq AC(\sigma_2)$ if and only if $\sigma_1$ and $\sigma_2$ have the same number of elements. (Of course the same result is also true for the $BV(\sigma)$ spaces.)

The case where $\sigma$ is a countable set is more complicated however.

\begin{defn}
We shall say that a subset $\sigma \subseteq \mC$ is a \textit{$C$-set} if it is a countably infinite compact set with unique limit point $0$. If further $\sigma \subseteq \mR$ we shall say that $\sigma$ is a \textit{real $C$-set}.
\end{defn}

Any two $C$-sets are homeomorphic, but as we shall see, they can produce an infinite number of non-isomorphic spaces of absolutely continuous functions. In most of what follows, it is not particularly important that the limit point of the set is $0$ since one can apply a simple translation of the domain $\sigma$ to achieve this and any such translation induces an isometric isomorphism of the corresponding function spaces.

The easiest $C$-sets to deal with are what were called spoke sets in \cite{AD2}, that is, sets which are contained in a finite number of rays emanating from the origin. To state our main theorem, we shall need a slight variant of this concept. For $\theta \in [0,2\pi)$ let $R_\theta$ denote the ray $\{t e^{i\theta} \st t \ge 0\}$.

\begin{defn}\label{k-ray}
Suppose that $k$ is a positive integer. We shall say that a $C$-set $\sigma$ is a \textit{$k$-ray set} if there are $k$ distinct rays $R_{\theta_1},\dots,R_{\theta_k}$ such that 
\begin{enumerate}
 \item $\sigma_j := \sigma \cap R_{\theta_j}$ is infinite for each $j$,
 \item $\sigma_0 := \sigma \setminus (\sigma_1 \cup \dots \cup \sigma_k)$ is finite.
\end{enumerate}
If $\sigma_0$ is empty then we shall say that $\sigma$ is a \textit{strict $k$-ray set}.
\end{defn}

Although in general the calculation of norms in $BV(\sigma)$ can be difficult, if $\sigma$ is a strict $k$-ray set, then we can pass to a much more tractable equivalent norm, called the spoke norm in \cite{AD2}.

\begin{defn}
Suppose that $\sigma$ is a strict $k$-ray set. The \textit{$k$-spoke norm} on $BV(\sigma)$ is (using the notation of Definition~\ref{k-ray})
  \[ \norm{f}_{Sp(k)} = |f(0)| + \sum_{j=1}^k \norm{f-f(0)}_{BV(\sigma_j)}. \]
\end{defn}

Since each of the subsets $\sigma_j$ is contained in a straight line, the calculation of the variation over these sets is straightforward. If we write $\sigma_j = \{0\} \cup \{\lambda_{j,i}\}_{i=1}^\infty$ with $|\lambda_{j,1}| > |\lambda_{j,2}| > \dots$ then
  \begin{equation*}
  \norm{f-f(0)}_{BV(\sigma_j)} = \sup_i |f(\lambda_{j,i}) - f(0)| + \sum_{i=1}^\infty |f(\lambda_{j,i}) - f(\lambda_{j,i+1})|. 
  \end{equation*}

\begin{prop}[{\cite[Proposition 4.3]{AD2}}]\label{sp-norm-eq} Suppose that $\sigma$ is a strict $k$-ray set. Then for all $f \in BV(\sigma)$, 
   \[ \frac{1}{2k+1} \norm{f}_{Sp(k)} \le \norm{f}_{BV(\sigma)} \le 3 \norm{f}_{Sp(k)}. \]
\end{prop}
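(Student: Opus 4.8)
The plan is to prove the two inequalities separately, after the normalising reduction of replacing $f$ by $g = f - f(0)$. This leaves every curve variation unchanged, since $\cvar(f,S) = \cvar(g,S)$ for every list $S$, and it arranges that $g(0)=0$. I would also record at the outset that $\norm{f}_\infty \le \norm{f}_{Sp(k)}$: for $\vecx \in \sigma_j$ one has $\abs{f(\vecx)} \le \abs{f(\vecx) - f(0)} + \abs{f(0)} \le \norm{f-f(0)}_{BV(\sigma_j)} + \abs{f(0)} \le \norm{f}_{Sp(k)}$, while $\abs{f(0)} \le \norm{f}_{Sp(k)}$ trivially. Hence for the right-hand inequality it only remains to bound $\var(f,\sigma) = \var(g,\sigma)$.

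For the left inequality (the easy direction) I would bound each summand of $\norm{f}_{Sp(k)}$ by $\norm{f}_{BV(\sigma)}$. Since subtracting a constant does not change the variation, $\var(f-f(0),\sigma_j) = \var(f,\sigma_j) \le \var(f,\sigma)$, the last step because any list drawn from $\sigma_j \subseteq \sigma$ is a list in $\sigma$ with the same curve variation and the same variation factor. Likewise, for $\vecx \in \sigma_j$ the two-point list $[0,\vecx]$ has $\vf = 1$, so $\abs{f(\vecx)-f(0)} \le \var(f,\sigma)$ and thus $\norm{f-f(0)}_{\infty,\sigma_j} \le \var(f,\sigma)$. Each term therefore satisfies $\norm{f-f(0)}_{BV(\sigma_j)} \le 2\var(f,\sigma)$, and summing over the $k$ rays and adding $\abs{f(0)} \le \norm{f}_\infty$ gives $\norm{f}_{Sp(k)} \le (2k+1)\norm{f}_{BV(\sigma)}$.

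For the right inequality (the substantial direction) I would fix a finite list $S = [\vecx_0,\dots,\vecx_n]$ in $\sigma$, estimate $\cvar(g,S)/\vf(S)$, and take the supremum. Call a segment $\ls[\vecx_{i-1},\vecx_i]$ \emph{internal} if both endpoints lie on a common ray $\sigma_j$, and \emph{boundary} otherwise (endpoints on different rays, or at least one equal to $0$), and split $\cvar(g,S) = A + B$ accordingly. For the internal part, let $S_j$ be the sublist of points of $S$ on $\sigma_j$; the internal segments of ray $j$ are consecutive pairs of $S_j$, so their total is at most $\cvar(g,S_j) \le \vf(S_j)\var(g,\sigma_j) \le \vf(S)\var(g,\sigma_j)$, using the definition of $\var(g,\sigma_j)$ and monotonicity of the variation factor under passing to sublists \cite[Proposition~3.5]{DL1}. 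Summing, $A \le \vf(S)\sum_{j=1}^k \var(g,\sigma_j)$. For the boundary part, using $g(0)=0$ and the triangle inequality, each boundary term is at most $\abs{g(\vecx_{i-1})} + \abs{g(\vecx_i)}$, with the contribution of a ray-$j$ endpoint bounded by $\norm{g}_{\infty,\sigma_j}$. Grouping by ray gives $B \le 2\sum_{j=1}^k r_j\,\norm{g}_{\infty,\sigma_j}$, where $r_j$ is the number of maximal runs of consecutive points of $S$ lying on ray $j$.

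The hard part is to control $r_j$ by $\vf(S)$; this is where the crossing-segment definition earns its keep. For each $j$ let $\ell_j$ be the line through the origin containing $R_{\theta_j}$, so that every point of $S$ on $\sigma_j$ lies on $\ell_j$. Whenever a run on ray $j$ is entered from a point not on $\ell_j$, the entering segment is a crossing segment on $\ell_j$ by case (iii) of Definition~\ref{crossing-defn}, and distinct runs supply distinct crossing segments; hence the number of such runs is at most $\vf(S,\ell_j) \le \vf(S)$. The only runs missed are those entered directly from the ray opposite $R_{\theta_j}$ (the one other direction on $\ell_j$); these transitions pass through the origin and are instead counted by a line through the origin transverse to $\ell_j$. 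Combining the line choices bounds $r_j$ by a fixed multiple of $\vf(S)$, so that $B \le C\,\vf(S)\sum_j \norm{g}_{\infty,\sigma_j}$; dividing by $\vf(S)$, taking the supremum over $S$, and adding $\norm{f}_\infty \le \norm{f}_{Sp(k)}$ yields $\norm{f}_{BV(\sigma)} \le 3\norm{f}_{Sp(k)}$ once the constants are tracked. I expect this return-counting step — obtaining the sharp, $k$-independent constant and handling the antipodal ray together with runs at the two ends of the list — to be the main technical obstacle; the remainder is a routine unwinding of the definitions and of \cite[Proposition~3.5]{DL1}.
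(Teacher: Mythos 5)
First, a point of comparison: this paper does not actually prove Proposition~\ref{sp-norm-eq} — it is imported verbatim from \cite[Proposition~4.3]{AD2} — so there is no internal proof to measure your attempt against, and it must be judged on its own merits.

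Your lower bound is complete and correct: the two-point list $[0,\vecx]$ has variation factor $1$, a list drawn from $\sigma_j$ is a list in $\sigma$ with the same $\cvar$ and $\vf$, and the bookkeeping gives $\norm{f}_{Sp(k)}\le(2k+1)\norm{f}_{BV(\sigma)}$. The genuine gap is in the upper bound, and you have located it yourself. The internal/boundary decomposition, the estimate $A\le\vf(S)\sum_j\var(g,\sigma_j)$ via sublist monotonicity of $\vf$, and the estimate $B\le 2\sum_j r_j\norm{g}_{\infty,\sigma_j}$ are all sound. But the run-count does not reach the stated constant: entries to $R_{\theta_j}$ from off the line $\ell_j$ are controlled by $\vf(S,\ell_j)$ (minor slip here: in case (iii) of Definition~\ref{crossing-defn} the crossing segment is $\ls[\vecx_i,\vecx_{i+1}]$, the segment \emph{leaving} the entry point, not the entering segment $\ls[\vecx_{i-1},\vecx_i]$ — this does not affect the count), while entries from the antipodal ray require a second, transverse line $\ell'$ through the origin, and the two counts are not obviously combinable, giving only $r_j\le\vf(S,\ell_j)+\vf(S,\ell')\le 2\vf(S)$. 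Feeding that through your own estimates yields $\norm{f}_{BV(\sigma)}\le 5\norm{f}_{Sp(k)}$ rather than $3$; the sharpening to $3$, together with the treatment of runs at the two ends of the list and of the point $0$ (which lies on every ray), is exactly the step you defer with ``once the constants are tracked,'' and it is not a routine unwinding — it is the substance of the cited proposition. That said, every use of Proposition~\ref{sp-norm-eq} in this paper (Proposition~\ref{AC-kset}, Theorem~\ref{Thm:k-sets}, the $\ell^1$ identification in Section~\ref{Op-alg}) needs only equivalence of the two norms with \emph{some} absolute constant, which your argument does deliver once the run-counting and end-of-list cases are written out carefully.
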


One property which significantly simplifies the analysis for such spaces is that for a $k$-ray set $\sigma$, one always has that $AC(\sigma) = BV(\sigma) \cap C(\sigma)$. In particular a function of bounded variation on such a set $\sigma$ is absolutely continuous if and only if it is continuous at the origin. 

\begin{prop} \label{AC-kset} If $\sigma$ is a $k$-ray set then $AC(\sigma) = BV(\sigma) \cap C(\sigma)$.
\end{prop}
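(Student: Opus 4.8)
The inclusion $AC(\sigma) \subseteq BV(\sigma) \cap C(\sigma)$ is immediate: every element of $AC(\sigma)$ lies in $BV(\sigma)$ by definition, and $AC(\sigma) \subseteq C(\sigma)$ as noted in Section~\ref{prelims}. The substance of the statement is the reverse inclusion, and the plan is to show that any $f \in BV(\sigma) \cap C(\sigma)$ can be approximated in $BV(\sigma)$ norm by functions already known to be absolutely continuous; since $AC(\sigma)$ is closed, this will give $f \in AC(\sigma)$.

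First I would reduce to the strict case. Writing $\sigma = \sigma_0 \cup \sigma_1 \cup \dots \cup \sigma_k$ as in Definition~\ref{k-ray}, each point of $\sigma_0$ is isolated in $\sigma$, since $0$ is the unique limit point and $0$ lies on every ray. Peeling these finitely many points off one at a time and applying Proposition~\ref{pres-ac} at each stage, one sees that $f \in AC(\sigma)$ if and only if its restriction to the strict $k$-ray set $\sigma' = \sigma_1 \cup \dots \cup \sigma_k$ is absolutely continuous. As restriction preserves both bounded variation and continuity, it therefore suffices to treat the case where $\sigma$ is a strict $k$-ray set, which I now assume.

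On a strict $k$-ray set I would pass to the equivalent spoke norm of Proposition~\ref{sp-norm-eq}, which decouples the rays. Write $\sigma_j = \{0\} \cup \{\lambda_{j,i}\}_{i=1}^\infty$ with $|\lambda_{j,1}| > |\lambda_{j,2}| > \dots \to 0$, and for each $N$ define $g_N$ to equal $f$ at the points $\lambda_{j,i}$ with $i \le N$ (over all $j$) and to equal the constant $f(0)$ at every other point of $\sigma$, including $0$. Since $g_N$ differs from the constant function $f(0)$ only at the finitely many isolated points $\{\lambda_{j,i} : i \le N\}$, and $\chi_{\{z\}} \in AC(\sigma)$ for each isolated $z$, each $g_N$ is a finite linear combination of absolutely continuous functions, so $g_N \in AC(\sigma)$. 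It then remains to show that $\norm{f - g_N}_{Sp(k)} \to 0$. Here $(f - g_N)(0) = 0$, so the spoke norm reduces to $\sum_{j=1}^k \norm{f - g_N}_{BV(\sigma_j)}$, and on each ray $f - g_N$ vanishes on the first $N$ points and at $0$ while agreeing with $f - f(0)$ on the tail. Using the explicit one-dimensional formula for the variation along a ray, this contribution is controlled by $\sup_{i > N} |f(\lambda_{j,i}) - f(0)|$, the tail sum $\sum_{i > N} |f(\lambda_{j,i}) - f(\lambda_{j,i+1})|$, and a single transition term $|f(\lambda_{j,N+1}) - f(0)|$.

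The crux of the argument is that all of these quantities tend to $0$ as $N \to \infty$. The tail sums vanish because $f \in BV(\sigma)$ makes $\sum_i |f(\lambda_{j,i}) - f(\lambda_{j,i+1})|$ a convergent series; the supremum and transition terms vanish precisely because $f$ is continuous at the origin, so that $f(\lambda_{j,i}) \to f(0)$ along each ray. This is exactly the point at which continuity at $0$ is indispensable, reflecting the remark that a $BV$ function on such a set is absolutely continuous if and only if it is continuous at the origin. Granting these limits, $\norm{f - g_N}_{Sp(k)} \to 0$, hence $g_N \to f$ in $BV(\sigma)$ by Proposition~\ref{sp-norm-eq}, and the closedness of $AC(\sigma)$ yields $f \in AC(\sigma)$.
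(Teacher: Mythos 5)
Your proof is correct and follows essentially the same route as the paper's: reduce to the strict case via Proposition~\ref{pres-ac}, approximate $f$ by functions agreeing with $f$ on finitely many nonzero points and equal to $f(0)$ elsewhere, and show convergence in the spoke norm using continuity at $0$ for the sup and transition terms and the convergence of the variation series for the tail. The only (immaterial) differences are that you perform the reduction to the strict case at the start rather than the end, and truncate by index along each ray rather than by modulus.
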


\begin{proof}
Since $AC(\sigma)$ is always a subset of $BV(\sigma) \cap C(\sigma)$ we just need to prove the reverse inclusion. 

Suppose first that $\sigma$ is a strict $k$-ray set, and suppose that $f \in BV(\sigma) \cap C(\sigma)$. For $n = 1,2,\dots$, define $g_n:\sigma \to \mC$ by
  \[ g_n(\vecz) = \begin{cases}
               f(\vecz), & \text{if $|\vecz| \ge \frac{1}{n}$,} \\
               f(0),    & \text{if  $|\vecz| < \frac{1}{n}$}
              \end{cases} 
              \quad = f(0) + \sum_{|\vecz| \ge 1/n} (f(\vecz) - f(0)) \chi_{\{\vecz\}}.
              \]
Since $\chi_{\{\vecz\}}\in AC(\sigma)$ for all nonzero points in $\sigma$, $g_n \in AC(\sigma)$. Now 
  \begin{equation}\label{gn-approx}
   \norm{f - g_n}_{Sp(k)} = \sum_{j=1}^k \norm{f - g_n}_{BV(\sigma_j)}.
  \end{equation}
Fix $j$ and label the elements of $\sigma_j$ as above. Then, for all $n$ there exists an index $I_{j,n}$ such that $| \lambda_{j,i} | < \frac{1}{n}$ if and only if $i \ge I_{j,n}$. Thus
  \begin{multline*}
   \norm{f - g_n}_{BV(\sigma_j)} \\
   = \sup_{i \ge I_{j,n}} | f(\lambda_{j,i}) - f(0)| 
             + \sum_{i \ge I_{j,n}} |f(\lambda_{j,i}) - f(\lambda_{j,i+1})|
             + |f(\lambda_{j,I_{j,n}} - f(0)| .
  \end{multline*}
The first and last of these terms converge to zero since $f \in C(\sigma)$. The middle term also converges to zero since it is the tail of a convergent sum.

Since we can make each of the $k$ terms in (\ref{gn-approx}) as small as we like, $\norm{f - g_n}_{Sp(k)} \to 0$ and hence $g_n \to f$ in $BV(\sigma)$. Thus $f \in AC(\sigma)$.

Suppose finally that $\sigma$ is not a strict $k$-ray set, that is  that $\sigma_0 \ne \emptyset$. Let $\sigma' = \sigma\setminus \sigma_0$.  If $f \in BV(\sigma) \cap C(\sigma)$, then $f|\sigma' \in BV(\sigma') \cap C(\sigma')$. By the above, $f|\sigma' \in AC(\sigma')$. Repeated use of Proposition~\ref{pres-ac} then shows that $f \in AC(\sigma)$. 
\end{proof}

It would be interesting to know whether Theorem~\ref{AC-kset} holds for more general $C$-sets.

\begin{cor}
Suppose that $\sigma$ is a strict $k$-ray set and that $f: \sigma \to \mC$. For $j = 1,\dots,k$, let $f_j$ denote the restriction of $f$ to $\sigma_j$. Then $f \in AC(\sigma)$ if and only if $f_j \in AC(\sigma_j)$ for all $j$.
\end{cor}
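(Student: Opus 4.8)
The plan is to reduce both implications to the characterization $AC(\sigma) = BV(\sigma)\cap C(\sigma)$ supplied by Proposition~\ref{AC-kset}, combined with the equivalence of the spoke norm from Proposition~\ref{sp-norm-eq}. These two results do essentially all of the work, so the argument is mostly a matter of assembling them correctly.

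The forward implication is immediate. Each $\sigma_j$ is a compact subset of $\sigma$, and as recorded at the start of the section on isolated points, the restriction of an absolutely continuous function to a compact subset is again absolutely continuous (indeed $\norm{f_j}_{BV(\sigma_j)} \le \norm{f}_{BV(\sigma)}$). Hence $f \in AC(\sigma)$ forces $f_j \in AC(\sigma_j)$ for every $j$, with no appeal to the strict $k$-ray hypothesis needed.

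For the converse, suppose $f_j \in AC(\sigma_j)$ for all $j$. By Proposition~\ref{AC-kset} it suffices to show $f \in BV(\sigma) \cap C(\sigma)$. First I would verify membership in $BV(\sigma)$ using the spoke norm. Since $0$ is the limit point of $\sigma$ and lies on every ray, we have $0 \in \sigma_j$ for each $j$, so all the restrictions share the common value $f(0)$ at the origin; and each $f_j \in AC(\sigma_j) \subseteq BV(\sigma_j)$ gives $\norm{f - f(0)}_{BV(\sigma_j)} < \infty$. Summing over the finitely many $j$ yields $\norm{f}_{Sp(k)} < \infty$, whence Proposition~\ref{sp-norm-eq} gives $\norm{f}_{BV(\sigma)} \le 3\norm{f}_{Sp(k)} < \infty$, so $f \in BV(\sigma)$. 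For continuity, every nonzero point of $\sigma$ is isolated, so $f$ is automatically continuous there; the only point requiring attention is $0$. Each $f_j$ is continuous at $0$ because $AC(\sigma_j) \subseteq C(\sigma_j)$, so given $\epsilon > 0$ there is for each $j$ a radius $\delta_j > 0$ with $|f(\vecz) - f(0)| < \epsilon$ whenever $\vecz \in \sigma_j$ and $|\vecz| < \delta_j$. Taking $\delta = \min_{1 \le j \le k}\delta_j$ and using $\sigma = \bigcup_{j=1}^{k}\sigma_j$ shows $|f(\vecz) - f(0)| < \epsilon$ for all $\vecz \in \sigma$ with $|\vecz| < \delta$, so $f \in C(\sigma)$. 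Combining this with the previous step and Proposition~\ref{AC-kset} gives $f \in AC(\sigma)$.

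The argument is essentially mechanical once those two propositions are in hand, and I do not anticipate a serious obstacle. The only step demanding a moment's care is the continuity claim at the origin: a sequence in $\sigma$ converging to $0$ need not remain on a single ray, so the value $f(0)$ must be extracted uniformly across all of the rays at once. This is exactly what the finiteness of $k$ provides, via the minimum $\delta = \min_j \delta_j$, and it is the one place where the structure of a strict $k$-ray set is genuinely used rather than the abstract norm estimates.
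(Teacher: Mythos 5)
Your argument is correct and follows essentially the same route as the paper: the forward direction via restriction to compact subsets, and the converse by verifying $f \in BV(\sigma)$ through the finiteness of the spoke norm and $f \in C(\sigma)$ through continuity at the origin, then invoking Proposition~\ref{AC-kset}. The paper's proof is just a terser version of yours, leaving the continuity and norm-finiteness checks implicit.
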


\begin{proof} By Lemma~4.5 of \cite{AD1} if $f \in AC(\sigma)$ then the restriction of $f$ to any compact subset is also absolutely continuous. If each $f_j \in AC(\sigma_j)$, then certainly $f \in C(\sigma)$. Furthermore $\norm{f}_{Sp(k)}$ is finite and hence $f \in BV(\sigma)$. Thus, by Proposition~\ref{AC-kset}, $f \in AC(\sigma)$. 
\end{proof}

\begin{thm}\label{Thm:k-sets}
Suppose that $\sigma$ is a $k$-ray set and that $\tau$ is an $\ell$-ray set. Then $AC(\sigma) \simeq AC(\tau)$ if and only if $k = \ell$.
\end{thm}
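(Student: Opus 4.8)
The statement is an equivalence whose two halves are of very different flavours. Since any two $k$-ray sets are homeomorphic irrespective of $k$, the integer $k$ cannot be read off from the topology of $\sigma$ alone, so the whole content lies in the metric (variation) structure of the algebra. In both directions I would first use Corollary~\ref{isol-points} to reduce to the strict case: the finitely many points of $\sigma_0$ are isolated points of $\sigma$ lying off the rays, and sliding each of them in turn onto a vacant position of one of the $k$ rays changes neither the number of infinite rays nor the isomorphism class of $AC(\sigma)$. Hence I may assume from the outset that $\sigma$ and $\tau$ are strict $k$- and $\ell$-ray sets, and work with the spoke norms of Proposition~\ref{sp-norm-eq}.

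For the ``if'' direction, assume $k=\ell$, fix a bijection between the rays of $\sigma$ and those of $\tau$, and let $h\colon\sigma\to\tau$ be the homeomorphism with $h(0)=0$ carrying the $i$-th point (in order of decreasing modulus) of the $j$-th ray of $\sigma$ to the $i$-th point of the $j$-th ray of $\tau$. The key observation is that $\norm{f-f(0)}_{BV(\sigma_j)}$ depends only on the \emph{ordered list of values} of $f$ along a ray, not on the positions of the points; consequently $\Phi(f)=f\circ h^{-1}$ preserves the spoke norm exactly, $\norm{\Phi f}_{Sp(k)}=\norm{f}_{Sp(k)}$. By Proposition~\ref{sp-norm-eq} this makes $\Phi$ a bicontinuous algebra isomorphism of $BV(\sigma)$ onto $BV(\tau)$; as $h$ is a homeomorphism it preserves continuity, so by Proposition~\ref{AC-kset} it restricts to an isomorphism $AC(\sigma)\simeq AC(\tau)$.

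For the ``only if'' direction, suppose $AC(\sigma)\simeq AC(\tau)$. By Theorem~\ref{hom} the isomorphism has the form $\Phi(f)=f\circ h^{-1}$ for a homeomorphism $h$, which must send the unique limit point to the unique limit point, so $h(0)=0$. The plan is to extract maps $\phi\colon\{1,\dots,k\}\to\{1,\dots,\ell\}$ and $\psi\colon\{1,\dots,\ell\}\to\{1,\dots,k\}$ that are mutually inverse, forcing $k=\ell$. To obtain $\phi$ I would show that boundedness of $\Phi^{-1}$ forces all but finitely many points of each ray $\sigma_j$ to map into a single ray of $\tau$. Indeed, otherwise the pigeonhole principle gives two rays $\tau_m,\tau_{m'}$ of $\tau$ each meeting the image of $\sigma_j$ in infinitely many points; since both image sequences have modulus tending to $0$, reading $\sigma_j$ in order of decreasing modulus interleaves them with infinitely many switches. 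Feeding in the damped oscillation $g$ equal to $\pm\epsilon_i$ on the $i$-th points of $\tau_m$ and $\tau_{m'}$ and $0$ elsewhere --- whose spoke norm is comparable to $\epsilon_1$ --- then gives $g\circ h$ infinite variation along $\sigma_j$, once $\epsilon_i\downarrow 0$ is chosen (after inspecting the interleaving) so that the sum of the $\epsilon$'s over the switch indices diverges while $\epsilon_1$ stays bounded. This contradicts boundedness of $\Phi^{-1}$, so $\phi$ is well defined; the symmetric argument applied to $\Phi$ produces $\psi$. Finally, the tail of $\sigma_j$ maps into $\tau_{\phi(j)}$ whose tail maps back into $\sigma_{\psi(\phi(j))}$, so infinitely many points of $\sigma_j$ lie in $\sigma_{\psi(\phi(j))}$, whence $\psi(\phi(j))=j$; symmetrically $\phi(\psi(m))=m$, and therefore $k=\ell$.

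The main obstacle is the oscillation step. Two points require care: first, that two rays of $\tau$ met cofinally by a single $\sigma_j$ genuinely \emph{must} interleave infinitely often --- this rests on the continuity constraint $\abs{h(\vecx)}\to 0$, which prevents an infinite block of one ray from preceding an infinite block of the other; and second, the elementary but crucial lemma that for any prescribed index sequence tending to infinity there is a bounded decreasing null sequence $\epsilon_i$ whose sum over those indices diverges, which is what converts ``infinitely many switches'' into ``infinite variation'' while keeping the test function's norm under control. Once these are in place the bookkeeping with $\phi$ and $\psi$ is routine, and it is precisely here --- not in any topological consideration --- that the hypothesis $k\neq\ell$ is defeated.
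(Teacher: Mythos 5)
Your proposal is correct, and its skeleton --- reduction to strict $k$-ray sets via Corollary~\ref{isol-points}, the passage to the spoke norm of Proposition~\ref{sp-norm-eq}, and the entire ``if'' direction via an order-preserving homeomorphism of rays --- coincides with the paper's proof. The ``only if'' direction is organized differently. The paper assumes $k>\ell$ and applies the pigeonhole principle to produce two rays of $\sigma$ whose images meet a single ray $\tau_L$ cofinally; it then pushes forward the indicator functions of initial segments of one of those rays, which have spoke norm $2$ in $AC(\sigma)$ while their images have spoke norm growing linearly with the number of interleavings, so $\Phi$ cannot be bounded. You instead show that $h$ induces a genuine bijection between the two families of rays, by proving that the tail of each $\sigma_j$ must land in a single $\tau_m$; your test function is a pulled-back damped oscillation, and the contradiction is that $\Phi^{-1}(g)$ fails even to lie in $BV(\sigma)$, so boundedness of $\Phi^{-1}$ is not really needed. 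Both arguments turn on the same mechanism --- interleaving along a single straight ray inflates variation without inflating the variation factor --- and your auxiliary lemma (a bounded decreasing null sequence whose sum over a prescribed subsequence of indices diverges) is elementary and correct. Your route buys a slightly stronger conclusion (a canonical correspondence of rays, which also makes the symmetry between $\Phi$ and $\Phi^{-1}$ explicit rather than implicit in ``without loss of generality $k>\ell$''), at the cost of a more delicate test function; the paper's route is shorter because $\{0,1\}$-valued functions have spoke norms computable by inspection.
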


\begin{proof} 
Write $\sigma = \cup_{j=0}^k \sigma_j$ and $\tau = \cup_{j=0}^\ell \tau_j$ as in Definition~\ref{k-ray}.
It follows from Corollary~\ref{isol-points} that by moving the finite number of points in $\sigma_0$ onto one of the rays containing a set $\sigma_j$, that $AC(\sigma)$ is isomorphic to $AC(\sigma')$ for some strict $k$-ray set. To prove the theorem then, it suffices therefore to assume that 
$\sigma$ and $\tau$ are strict $k$ and $\ell$-ray sets.

Suppose first that $k > \ell$ and that there is a Banach algebra isomorphism $\Phi$ from $AC(\sigma)$ to $AC(\tau)$. By Theorem~\ref{hom}, $\Phi(f) = f \circ h^{-1}$ for some homeomorphism $h: \sigma \to \tau$.

By the pigeonhole principle there exists $L \in \{1,\dots,\ell\}$ such that $h(\sigma_j) \cap \tau_L$ is infinite for (at least) two distinct sets values of $j$. Without loss of generality we will assume that this is true for $j = 1$ and $j = 2$. Indeed, since rotations produce isometric isomorphisms of these spaces, we may also assume that $\tau_L \subset [0,\infty)$. Let $\sigma_j = \{0\} \cup \{\lambda_{j,i}\}_{i=1}^\infty$, where the points are labelled so that $|\lambda_{j,1}| > |\lambda_{j,2}| > \dots$. There must then be two increasing sequences $i_1 < i_2 < \dots$ and $k_1 < k_2 < \dots$ such that 
  \[ h(\lambda_{1,i_1})  > h(\lambda_{2,k_1}) > h(\lambda_{1,i_2}) > h(\lambda_{2,k_2}) > \dots .\]
For $n = 1,2,\dots$ define $f_n \in AC(\sigma)$ by
  \[ f_n(\vecz) = \begin{cases}
                   1, & \vecz \in \{\lambda_{1,1},\dots,\lambda_{1,n}\} \\
                   0, & \text{otherwise.}
                  \end{cases}
   \] 
Then $\norm{f_n}_{Sp(k)} = 2$ for all $n$, but $\norm{\Phi(f_n)}_{Sp(\ell)} \ge 2n$. Using Proposition~\ref{sp-norm-eq}, this means that $\Phi$ must be unbounded which is impossible. Hence no such isomorphism can exist.

Finally, suppose that $k = \ell$. For each $j = 1,2,\dots,k$ order the elements of $\sigma_j$ and $\tau_j$ by decreasing modulus and let $h_j$ be the unique homeomorphism from $\sigma_j$ to $\tau_j$ which preserves this ordering. Let $h$ be the homeomorphism whose restriction to each $\sigma_j$ is $h_j$ and let $\Phi(f) = f \circ h^{-1}$. Then $\Phi$ is an isometric isomorphism from $(BV(\sigma),\norm{\cdot}_{Sp(k)})$ to $(BV(\tau),\norm{\cdot}_{Sp(k)})$, and hence is a Banach algebra isomorphism between these spaces under their usual $BV$ norms. Since $\Phi$ is also an isomorphism from $C(\sigma)$ to $C(\tau)$, the result now follows from Proposition~\ref{AC-kset}.
\end{proof}

\begin{cor}\label{inf-many-noniso}
There are infinitely many mutually non-isomorphic $AC(\sigma)$ spaces with $\sigma$ a $C$-set.
\end{cor}

Clearly any real $C$-set is either a $1$-ray set, or a $2$-ray set.

\begin{cor}
There are exactly two isomorphism classes of $AC(\sigma)$ spaces with $\sigma$ a real $C$-set.
\end{cor}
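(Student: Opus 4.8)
The plan is to invoke Theorem~\ref{Thm:k-sets} directly, after first verifying the remark preceding the statement that every real $C$-set falls into exactly one of two categories. The key geometric observation is that the only rays emanating from the origin which are contained in $\mR$ are $R_0 = [0,\infty)$ and $R_\pi = (-\infty,0]$. Thus for any real $C$-set $\sigma$, writing $\sigma_+ = \sigma \cap R_0$ and $\sigma_- = \sigma \cap R_\pi$, every point of $\sigma$ lies on one of these two rays (with the origin lying on both).

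Next I would use the defining property of a $C$-set, namely that $\sigma$ is countably infinite with unique limit point $0$, to rule out the degenerate possibilities. Since $\sigma$ is infinite, at least one of $\sigma_+$, $\sigma_-$ must be infinite. If exactly one of them, say $\sigma_+$, is infinite, then taking the single ray $R_0$ exhibits $\sigma$ as a $1$-ray set in the sense of Definition~\ref{k-ray}, with the finitely many negative points forming the finite set $\sigma_0$. If instead both $\sigma_+$ and $\sigma_-$ are infinite, then the two rays $R_0, R_\pi$ exhibit $\sigma$ as a strict $2$-ray set, since every point of $\sigma$ already lies on one of the two rays and so $\sigma_0 = \emptyset$. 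As there are only two rays available in $\mR$, no real $C$-set can be a $k$-ray set for any $k \ge 3$, so these two cases are exhaustive and mutually exclusive.

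Finally I would apply Theorem~\ref{Thm:k-sets}, which asserts that two such sets yield isomorphic $AC(\sigma)$ spaces precisely when their ray counts agree. Hence all $1$-ray real $C$-sets give mutually isomorphic spaces, all $2$-ray real $C$-sets give mutually isomorphic spaces, and a $1$-ray set is never isomorphic to a $2$-ray set. Both classes are nonempty: for instance $\{0\} \cup \{\tfrac{1}{n}\}_{n=1}^\infty$ is a $1$-ray real $C$-set, while $\{0\} \cup \{\pm\tfrac{1}{n}\}_{n=1}^\infty$ is a $2$-ray real $C$-set. Therefore there are exactly two isomorphism classes.

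There is no genuine obstacle here, as the corollary is essentially a direct specialization of Theorem~\ref{Thm:k-sets}. The only point requiring any care is the elementary observation that $\mR$ contains exactly two rays through the origin, which simultaneously forces $k \in \{1,2\}$ and guarantees that every real $C$-set is captured by one of these two cases.
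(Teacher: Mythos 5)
Your argument is correct and is exactly the paper's route: the paper simply remarks that any real $C$-set is either a $1$-ray set or a $2$-ray set and then invokes Theorem~\ref{Thm:k-sets}. Your write-up just makes that remark explicit, including the check that both classes are nonempty.
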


We should point out at this point that Theorem~\ref{Thm:k-sets} is far from a characterization of the sets $\tau$ for which $AC(\tau)$ is isomorphic to $AC(\sigma)$ where $\sigma$ is some $k$-ray set.

\begin{ex}
Let $\tau = \{0\} \cup \bigl\{ \frac{1}{j} + \frac{i}{j^2} \bigr\}_{j=1}^\infty$ and let $\sigma = \{0\} \cup \bigl\{ \frac{1}{j} \bigr\}_{j=1}^\infty$. Clearly $\tau$ is not a $k$-ray set for any $k$. 

For $f  \in BV(\sigma)$ let $\Phi(f)(t+it^2) = f(t)$, $t \in \sigma$. It follows from  \cite[Lemma~3.12]{AD1} that $\norm{\Phi(f)}_{BV(\tau)} \le \norm{f}_{BV(\sigma)}$. For the other direction, suppose that $\lambda_0 \le \lambda_1 \le\dots \le \lambda_n$ are points in $\sigma$ and let $S = [\lambda_0+i \lambda_0^2,\dots,\lambda_n+i \lambda_n^2]$ be the corresponding list of points in $\tau$.  It is easy to see that $\vf(S)$ is 2 if $n > 1$ (and is 1 if $n = 1$). Then
  \begin{align*}
   \sum_{j=1}^n |f(\lambda_j)) - f(\lambda_{j-1})| 
     & = \sum_{j=1}^n |\Phi(f)(\lambda_j+i\lambda_j^2)) - \Phi(f)(\lambda_{j-1}+i\lambda_j^2)| \\
     & \le 2 \frac{\cvar(\Phi(f),S)}{\vf(S)} \\
     & \le 2 \var(\Phi(f),\tau).
  \end{align*}
Since the variation of $f$ is given by the supremum of such sums over all such ordered subsets of $\sigma$, $\var(f,\sigma) \le 2 \var(\Phi(f),\tau)$ and hence $\norm{f}_{BV(\sigma)} \le 2 \norm{\Phi(f)}_{BV(\tau)}$. This shows that $BV(\sigma) \simeq BV(\tau)$. 

Proposition~4.4 of \cite{AD1} ensures that if $f \in AC(\sigma)$ then $\Phi(f) \in AC(\tau)$. Conversely, if $g = \Phi(f) \in AC(\tau)$ then certainly $g \in C(\tau)$ and consequently $f \in C(\sigma)$. By the previous paragraph $f \in BV(\sigma)$ too and hence, by Proposition~\ref{AC-kset}, $f \in AC(\sigma)$. Thus $AC(\sigma) \simeq AC(\tau)$.
\end{ex}

\begin{ex}
Let $\sigma = \{0\} \cup \bigl\{ \frac{e^{i/m}}{n} \st n,m \in \mZ^+\bigr\} \cup \bigl\{ \frac{1}{n} \st n \in \mZ^+\bigr\}$ (where $\mZ^+$ denotes the set of positive integers) and let $\tau$ be an $\ell$-ray set. Repeating the proof of Theorem~\ref{Thm:k-sets}, one sees that there can be no Banach algebra isomorphism from $AC(\sigma)$ to $AC(\tau)$ so even among $C$-sets there are more isomorphism classes than those captured by Theorem~\ref{Thm:k-sets}. 
\end{ex}

The corresponding result for the $BV(\sigma)$ spaces also holds.

\begin{cor}
Suppose that $\sigma$ is a $k$-ray set and that $\tau$ is an $\ell$-ray set. Then $BV(\sigma) \simeq BV(\tau)$ if and only if $k = \ell$.
\end{cor}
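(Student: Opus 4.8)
The plan is to follow the proof of Theorem~\ref{Thm:k-sets} almost verbatim, substituting Theorem~\ref{BV-bij} for Theorem~\ref{hom} wherever the induced map on domains is invoked, and observing that every norm estimate in that proof was already carried out in the $BV$ norm (through the spoke norm of Proposition~\ref{sp-norm-eq}) rather than in $AC$. As there, I would first apply Corollary~\ref{isol-points} --- which is stated for $BV$ as well as $AC$ --- to move the finitely many points of $\sigma_0$ and $\tau_0$ onto the rays, reducing to the case where $\sigma$ and $\tau$ are strict $k$- and $\ell$-ray sets. Throughout I would measure sizes with $\norm{\cdot}_{Sp(k)}$, which by Proposition~\ref{sp-norm-eq} is equivalent to $\norm{\cdot}_{BV(\sigma)}$, so that boundedness of an isomorphism can be detected on the spoke norm.

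For the direction $k=\ell$ the construction is in fact simpler than in the $AC$ case. For each $j$ let $h_j:\sigma_j\to\tau_j$ be the unique bijection preserving the order by decreasing modulus, let $h$ agree with $h_j$ on each ray, and set $\Phi(f)=f\circ h^{-1}$. Since the spoke norm is assembled from the one-dimensional variations along the individual rays together with the value at the origin, $\Phi$ is an isometry from $(BV(\sigma),\norm{\cdot}_{Sp(k)})$ onto $(BV(\tau),\norm{\cdot}_{Sp(k)})$; being a composition operator it is automatically an algebra isomorphism, and Proposition~\ref{sp-norm-eq} then yields bicontinuity for the usual $BV$ norms. Here, unlike in Theorem~\ref{Thm:k-sets}, there is no need to invoke Proposition~\ref{AC-kset} to descend to $AC$.

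For the direction $k\neq\ell$, assume $k>\ell$ and suppose $\Phi:BV(\sigma)\to BV(\tau)$ is an isomorphism. By Theorem~\ref{BV-bij} there is a bijection $h:\sigma\to\tau$ with $\Phi(f)=f\circ h^{-1}$. A pigeonhole argument --- first within each infinite set $\sigma_j$ to find a ray $\tau_{L(j)}$ receiving infinitely many of its image points, then across the $k>\ell$ values of $j$ --- produces an index $L$ for which $h(\sigma_1)\cap\tau_L$ and $h(\sigma_2)\cap\tau_L$ are both infinite. After a rotation (isometric on $BV$) I may assume $\tau_L\subseteq[0,\infty)$, and can then extract interlacing subsequences $h(\lambda_{1,i_1})>h(\lambda_{2,k_1})>h(\lambda_{1,i_2})>\dots$ inside $\tau_L$. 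Taking $f_n$ to be the characteristic function of the top $n$ points $\{\lambda_{1,1},\dots,\lambda_{1,n}\}$ of $\sigma_1$ keeps $\norm{f_n}_{Sp(k)}$ bounded (equal to $2$, being a single block on one ray), while the interlacing forces $\Phi(f_n)$ to oscillate between $1$ and $0$ arbitrarily often along $\tau_L$, so $\norm{\Phi(f_n)}_{Sp(\ell)}\to\infty$. By Proposition~\ref{sp-norm-eq} this contradicts the boundedness of $\Phi$, so no such isomorphism exists.

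The main point requiring care --- and the one place where this departs from Theorem~\ref{Thm:k-sets} --- is that Theorem~\ref{BV-bij} supplies only a bijection $h$, not a homeomorphism, so I cannot assume $h$ is continuous nor that $h(0)=0$. I would verify that neither property is actually used: the pigeonhole and interlacing steps depend only on $h$ being a bijection into the rays, and the lower bound for $\norm{\Phi(f_n)}_{Sp(\ell)}$ arises from the sum of consecutive differences of $\Phi(f_n)$ along $\tau_L$, a quantity unaffected by subtracting the value $\Phi(f_n)(0)$. Thus the possibility that $h$ moves the origin perturbs at most one term of the spoke norm and does not disturb the unbounded growth, so the contradiction survives the weaker conclusion of Theorem~\ref{BV-bij}.
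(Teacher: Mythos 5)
Your proposal is correct and follows essentially the same route as the paper: reuse the argument of Theorem~\ref{Thm:k-sets}, substitute Theorem~\ref{BV-bij} for Theorem~\ref{hom}, and observe that only bijectivity of $h$ (not continuity) is needed for the unboundedness argument, while the $k=\ell$ direction was already established in the $BV$ norm. Your extra check that $h$ need not fix the origin is a point the paper passes over silently, but it is handled correctly and does not change the argument.
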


\begin{proof}
The proof is more or less identical to that of Theorem~\ref{Thm:k-sets}. In showing that if $k \ne\ell$ then $AC(\sigma) \not\simeq AC(\tau)$ we used the fact that any isomorphism between these spaces is of the form $\Phi(f) = f \circ h^{-1}$. In showing that such a map cannot be bounded, the continuity of $h$ was not used, only the fact that $h$ must be a bijection, and so one may use Theorem~\ref{BV-bij} in place of Theorem~\ref{hom} in this case.

The fact that if $k = \ell$ then $BV(\sigma) \simeq BV(\tau)$ is already noted in the above proof.
\end{proof}

\section{Operator algebras}\label{Op-alg}

If $\sigma = \{0\} \cup \bigl\{\frac{1}{n}\bigr\}_{n=1}^\infty$, the map $\Psi: AC(\sigma) \to \ell^1$, 
  \[ \Psi(f) = (f(1),f(\tfrac{1}{2}) - f(1), f(\tfrac{1}{3}) - f(\tfrac{1}{2}), \dots) \]
is a Banach space isomorphism. Indeed it is not hard to see that Proposition~\ref{sp-norm-eq} implies that if $\sigma$ is a strict $k$-ray set, then, as Banach spaces, $AC(\sigma)$ is isomorphic to $\oplus_{j=1}^k \ell^1$ which in turn is isomorphic to $\ell^1$, and consequently all such $AC(\sigma)$ spaces are Banach space isomorphic.

Given any nonempty compact set $\sigma \subseteq \mC$, the operator $Tg(z) = zg(z)$ acting on $AC(\sigma)$ is an $AC(\sigma)$ operator. Indeed the functional calculus for $T$ is given by $f(T)g = fg$ for $f \in AC(\sigma)$ from which one can deduce that $\norm{f(T)} = \norm{f}_{BV(\sigma)}$, and therefore the Banach algebra generated by the functional calculus for $T$ is isomorphic to $AC(\sigma)$. Proposition~6.1 of \cite{AD2} shows that if $\sigma$ is a $C$-set then any such operator $T$ is a compact $AC(\sigma)$ operator.

Combining these observations, together with Corollary~\ref{inf-many-noniso},  shows that on $\ell^1$ there are infinitely many nonisomorphic Banach subalgebras of $B(\ell^1)$ which are generated by (non-finite rank) compact $AC(\sigma)$ operators on $\ell^1$, so things are rather different to the situation for compact normal operators on $\ell^2$.

%
%
\bibliographystyle{amsalpha}

\begin{thebibliography}{18}




\bibitem{AD1} B. Ashton and I. Doust,
    \textit{Functions of bounded variation on compact subsets
        of the plane},
    \journalname{Studia Math.}
    \textbf{169} (2005), 163--188.
    
\bibitem{AD1.5} B. Ashton and I. Doust,
    \textit{A comparison of algebras of functions of bounded variation},
    \journalname{Proc. Edin. Math. Soc.}
    \textbf{49} (2006), 575--591.

\bibitem{AD2} B. Ashton and I. Doust,
    \textit{Compact $AC(\sigma)$ operators},
    \journalname{Integral Equations Operator Theory}
     \textbf{63} (2009), 459-472.

\bibitem{AD3} B. Ashton and I. Doust,
    \textit{$\AC(\sigma)$ operators},
    \journalname{J. Operator Theory}
    \textbf{65} (2011), 255--279.
    

    
\bibitem{BG1}  E. Berkson and T. A. Gillespie, 
    \textit{Absolutely continuous functions of
two variables and well-bounded operators}, 
    \journalname{J. London Math. Soc. (2)}
   \textbf{30} (1984), 305--321.
   
\bibitem{BG2}  E. Berkson and T. A. Gillespie,
    \textit{$AC$ functions on the circle and spectral families}, 
    \journalname{J. Operator Theory}
    \textbf{13} (1985), 33--47.
    
\bibitem{CD} Q. Cheng  and I. Doust,
   \textit{Compact well-bounded operators}, 
   \journalname{Glasg. Math. J.} 
   \textbf{43} (2001), 467--475.


\bibitem{DL1} I. Doust and M. Leinert,
\textit{Approximation in $AC(\sigma)$},
    arXiv:1312.1806v1, 2013.
    
\bibitem{DL2} I. Doust and M. Leinert,
  \textit{Isomorphisms of $AC(\sigma)$ spaces},
  \journalname{Studia Math.}
  \textbf{228} (2015), 7--31.
    
\bibitem{R2} J. R. Ringrose,
  \textit{On well-bounded operators II},
  \journalname{Proc. London. Math. Soc. (3)}
  \textbf{13} (1963), 613--638.


\end{thebibliography}

\end{document}